\def\index#1{}
\newtheorem{thm}{Theorem}
\newtheorem{lemma}{Lemma}
\newtheorem{propos}{Proposition}
\theoremstyle{definition}
\newtheorem{remark}{Remark}
\newcommand{\rrvert}{\vert}
\newcommand{\llvert}{\vert}
\begin{document}

\begin{frontmatter}
\pretitle{Research Article}

\title{The risk model with stochastic premiums and a~multi-layer dividend strategy}

\author{\inits{O.}\fnms{Olena}~\snm{Ragulina}\ead[label=e1]{ragulina.olena@gmail.com}\orcid{0000-0002-5175-6620}}
\address{\institution{Taras Shevchenko National University of Kyiv},\break
Department of Probability Theory, Statistics and Actuarial Mathematics,\break
64 Volodymyrska Str., 01601 Kyiv, \cny{Ukraine}}

%\thankstext[id=f1]{}

%\dedicated{}

%\markboth{Authors}{Title}
\markboth{O. Ragulina}{The risk model with stochastic premiums and a~multi-layer dividend strategy}

\begin{abstract}
The paper deals with a generalization of the risk model with stochastic
premiums where dividends are paid according to a multi-layer dividend
strategy. First of all, we derive piecewise integro-differential
equations for the Gerber--Shiu function and the expected discounted
dividend payments until ruin. In addition, we concentrate on the
detailed investigation of the model in the case of exponentially
distributed claim and premium sizes and find explicit formulas for the
ruin probability as well as for the expected discounted dividend
payments. Lastly, numerical illustrations for some multi-layer dividend
strategies are presented.
\end{abstract}
\begin{keywords}
\kwd{Risk model with stochastic premiums}
\kwd{multi-layer dividend strategy}
\kwd{Gerber--Shiu function}
\kwd{expected discounted dividend payments}
\kwd{ruin probability}
\kwd{piecewise integro-differential equation}
\end{keywords}
\begin{keywords}[MSC2010]%
\kwd{91B30}
\kwd{60G51}
\end{keywords}

\received{\sday{29} \smonth{3} \syear{2019}}% Updated by VTEXPTS2LaTeX.exe, 09.08.2019 11:38
\revised{\sday{4} \smonth{8} \syear{2019}}% Updated by VTEXPTS2LaTeX.exe, 09.08.2019 11:38
\accepted{\sday{4} \smonth{8} \syear{2019}}% Updated by VTEXPTS2LaTeX.exe, 09.08.2019 11:38
\publishedonline{\sday{28} \smonth{8} \syear{2019}}
\end{frontmatter}

%s1 #&#
\section{Introduction}%
\label{sec:1}
The ruin measures such as the ruin probability,\index{ruin probability} the surplus prior to
ruin and the deficit at ruin have attracted great interest of
researchers recently (see, e.g., \cite{AsAl2010,MiRa2016,RoScScTe1999} and references therein). Gerber and Shiu
\cite{GeSh1998} introduced the expected discounted penalty function for
the classical risk model, which enabled to study those risk measures
together by combining them into one function. After that, the so-called
Gerber--Shiu function\index{Gerber--Shiu function} has been investigated by many authors in more
general risk models\index{risk models} (see, e.g.,
\cite{BoCoLaMa2006,ChVr2014,ChTa2003,CoMaMa2008,CoMaMa2010,GeSh2005,He2014,Su2005,XiWu2006,YoXi2012,ZhYa2011_1}).

In particular, a lot of attention has been paid to the study of risk
models\index{risk models} where shareholders receive dividends from their insurance
company. De Finetti \cite{De1957}, who first considered dividend
strategies\index{dividend ! strategies} in insurance, dealt with a binomial model. For the classical
risk model and its different generalizations, different dividend
strategies\index{dividend ! strategies} have been studied in a number of papers (see, e.g.,
\cite{ChLi2011,CoMaMa2011,CoMaMa2014,La2008,LiWuSo2009,LiGa2004,LiPa2006,LiWiDr2003,LiLiPe2014,ShLiZh2013,Wa2015,YaZh2008}).
In addition, the monograph by Schmidli \cite{Sc2008} is devoted
to optimal dividend problems in insurance risk models.

Applying multi-layer dividend strategies enables to change the dividend
payment intensity depending on the current surplus. Albrecher and
Har\-tin\-ger \cite{AlHa2007} consider the modification of the
classical risk model where both the premium intensity and the dividend
payment intensity are assumed to be step functions depending on the
current surplus level. The authors derive algorithmic schemes for the
\xch{determination}{determanation} of explicit expressions for the Gerber--Shiu function\index{Gerber--Shiu function} and
the expected discounted dividend payments.\index{dividend ! payments} A similar risk model\index{risk models} is
considered by Lin and Sendova \cite{LiSe2008}, who derive a
piecewise integro-differential equation for the Gerber--Shiu function\index{Gerber--Shiu function}
and provide a recursive approach to obtain general solutions to that
equation and its generalizations. Developing a recursive algorithm to
calculate the moments of the expected discounted dividend payments\index{dividend ! payments} for
a class of risk models with Markovian claim arrivals, Badescu and
Landriault \cite{BaLa2008} generalize some of the results obtained
in \cite{AlHa2007} (see also \cite{BaDrLa2007} for some
results related to the class of Markovian risk models\index{risk models} with a multi-layer
dividend strategy).

The absolute ruin problem in the classical risk model with constant
interest force and a multi-layer dividend strategy is investigated in
\cite{YaZhLa2008}, where a piecewise integro-differential equation
for the discounted penalty function is derived, some explicit
expressions are given when claims are exponentially distributed and an
asymptotic formula for the absolute ruin probability is obtained for
heavy-tailed claim sizes. The dual model of the compound Poisson risk
model with a multi-layer dividend strategy under stochastic interest is
considered in \cite{Yi2012}. Results related to perturbed compound
Poisson risk models under multi-layer dividend strategies can be found
in \cite{MiSeTs2010,YaZh2009_2}. In addition, different classes
of more general renewal risk models are investigated in
\cite{DeZhDe2012,JiYaLi2012,YaZh2008,YaZh2009_1}, and some recent
papers deal with risk models\index{risk models} that incorporate various dependence
structures (see, e.g.,
\cite{LiMa2016,XiZo2017,ZhYa2011_2,ZhXiDe2015}).

The present paper generalizes the risk model with stochastic premiums\index{stochastic premiums}
introduced and investigated in \cite{Boi2003} (see also
\cite{MiRa2016}). In that risk model,\index{risk models} both claims and premiums are
modeled as compound Poisson processes,\index{compound Poisson processes} whereas premiums arrive with
constant intensity and are not random in the classical compound Poisson
risk model (see also \cite{MiRaSt2014,MiRaSt2015} for a
generalization of the classical risk model where an insurance company
gets additional funds whenever a claim arrives). In
\cite{Boi2003}, claim sizes and inter-claim times are assumed to be
mutually independent, and the same assumption is made concerning premium
arrivals. In contrast to \cite{Boi2003}, the recent paper
\cite{Ra2017} deals with the risk model with stochastic premiums\index{stochastic premiums} where
the dependence structures between claim sizes and inter-claim times as
well as premium sizes and inter-premium times are modeled by the
Farlie--Gumbel--Morgenstern copulas, and dividends are paid according
to a threshold dividend strategy. The Gerber--Shiu function, a special
case of which is the ruin probability,\index{ruin probability} and the expected discounted
dividend payments\index{dividend ! payments} until ruin are studied in \cite{Ra2017}. In the
present paper, we develop those results and make the assumption that
dividends are paid according to a multi-layer dividend strategy and all
random variables and processes are mutually independent.\looseness=1

The rest of the paper is organized as follows. In Section~\ref{sec:2},
we give a description of the risk model with stochastic premiums\index{stochastic premiums} and a
multi-layer dividend strategy. In Sections~\ref{sec:3} and~\ref{sec:4},
we derive piecewise integro-differential equations for the Gerber--Shiu\index{Gerber--Shiu function}
function and the expected discounted dividend payments\index{dividend ! payments} until ruin. Next,
in Section~\ref{sec:5}, we deal with exponentially distributed claim and
premium sizes and obtain explicit formulas for the ruin probability\index{ruin probability} and
the expected discounted dividend payments.\index{dividend ! payments} Finally, Section~\ref{sec:6}
provides some numerical illustrations.

%s2 #&#
\section{Description of the model}%
\label{sec:2}
Let $(\varOmega , \mathfrak{F}, \mathbb{P})$ be a probability space
satisfying the usual conditions, and let all the stochastic objects we
use below be defined on it.

In the risk model with stochastic premiums\index{stochastic premiums} introduced in
\cite{Boi2003} (see also \cite{MiRa2016}), claim sizes form a
sequence $(Y_{i})_{i\ge 1}$ of non-negative independent and identically
distributed (i.i.d.) random variables (r.v.'s) with cumulative
distribution function (c.d.f.) $F_{Y}(y)=\mathbb{P}[Y_{i}\le y]$, and
the number of claims on the time interval $[0,t]$ is a Poisson process
$(N_{t})_{t\ge 0}$ with constant intensity $\lambda >0$. In addition,
premium sizes form a sequence $(\bar{Y}_{i})_{i\ge 1}$ of non-negative
i.i.d. r.v.'s with c.d.f. $\bar{F}_{\bar{Y}}(y)=\mathbb{P}[\bar{Y}
_{i}\le y]$, and the number of premiums on the time interval
$[0,t]$ is a Poisson process $(\bar{N}_{t})_{t\ge 0}$ with constant
intensity $\bar{\lambda }>0$. Thus, the total claims and premiums on
$[0,t]$ equal $\sum_{i=1}^{N_{t}} Y_{i}$ and $\sum_{i=1}^{\bar{N}_{t}}
\bar{Y}_{i}$, respectively.

It is worth \xch{pointing}{to point} out that, here and subsequently, a sum is always set
to %be equal to
0 if the upper summation index is less than the lower
one. In particular, we have $\sum_{i=1}^{0} Y_{i} =0$ if $N_{t} =0$, and
$\sum_{i=1}^{0} \bar{Y}_{i} =0$ if $\bar{N}_{t} =0$. In what follows,
we also assume that the r.v.'s $(Y_{i})_{i\ge 1}$ and $(\bar{Y}_{i})_{i
\ge 1}$ have finite expectations $\mu >0$ and $\bar{\mu }>0$,
respectively. Furthermore, we suppose that $(Y_{i})_{i\ge 1}$,
$(\bar{Y}_{i})_{i\ge 1}$, $(N_{t})_{t\ge 0}$ and $(\bar{N}_{t})_{t
\ge 0}$ are mutually independent.

Next, we denote a non-negative initial surplus of the insurance company
by $x$, and let $X_{t}(x)$ be its surplus at time $t$ provided that the
initial surplus is $x$. Then the surplus process\index{surplus process} $  (X_{t}(x))_{t\ge 0}$ %follows
is defined by the \xch{equality}{equation}
%
%e1 #&#
\begin{equation}
\label{eq:1} X_{t}(x) =x+\sum_{i=1}^{\bar{N}_{t}}
\bar{Y}_{i} -\sum_{i=1}^{N_{t}}
Y _{i}, \quad t\ge 0.
\end{equation}

In contrast to the risk model\index{risk models} considered in \cite{Boi2003}, we
make the additional assumption that the insurance company pays dividends
to its shareholders according to a $k$-layer dividend strategy with
$k\ge 2$. Let $\mathbf{b}=(b_{1},\ldots ,b_{k-1})$ be a
$(k-1)$-dimensional vector with real-valued components such that
$0<b_{1}<\cdots <b_{k-1}<\infty $. Besides that, we set $b_{0}=0$ and
$b_{k}=\infty $. Let $  (X_{t}^{\mathbf{b}}(x)  )_{t\ge 0}$ denote
the modified surplus process\index{surplus process} under the $k$-layer dividend strategy
$\mathbf{b}$, which implies that dividends are paid continuously at a
rate $d_{j}>0$ whenever $b_{j-1}\le X_{t}^{\mathbf{b}}(x)< b_{j}$, i.e.
the process $  (X_{t}^{\mathbf{b}}(x)  )_{t\ge 0}$ is in the
$j$th layer at time $t$, where $1\le j\le k$. Then
%
%e2 #&#
\begin{equation}
\label{eq:2} X_{t}^{\mathbf{b}}(x) =x+\sum
_{i=1}^{\bar{N}_{t}} \bar{Y}_{i} -\sum
_{i=1}^{N_{t}} Y_{i} - \int
_{0}^{t} \sum_{j=1}^{k}
d_{j} \Eins \bigl( b_{j-1}\le X_{s}^{\mathbf{b}}(x)<
b_{j} \bigr)\, \mathrm{d}s, \quad t\ge 0,
\end{equation}
where $\Eins (\cdot )$ is the indicator function.

From now on, we suppose that the net profit condition\index{net profit condition} holds, which in
this case means that
%
%e3 #&#
\begin{equation}
\label{eq:3} \bar{\lambda }\bar{\mu }> \lambda \mu +\max_{1\le j\le k}
\{d_{j}\}.
\end{equation}

Let $(D_{t})_{t\ge 0}$ denote the dividend distributing process. For the
$k$-layer dividend strategy described above, we have
\begin{equation*}
\mathrm{d}D_{t} =d_{j}\, \mathrm{d}s \quad \text{if}\
 b_{j-1} \le X_{t}^{\mathbf{b}}(x)<
b_{j}, \quad 1\le j\le k.
\end{equation*}

Next, let $\tau _{\mathbf{b}}(x)= \inf \{t\ge 0\colon X_{t}^{
\mathbf{b}}(x) <0\}$ be the ruin time for the risk process $  (X
_{t}^{\mathbf{b}}(x)  )_{t\ge 0}$ defined by \eqref{eq:2}. In what
follows, we omit the dependence on $x$ and write $\tau _{\mathbf{b}}$
instead of $\tau _{\mathbf{b}}(x)$ when no confusion can arise.

For $\delta _{0} \ge 0$, the Gerber--Shiu function\index{Gerber--Shiu function} is defined by
\begin{equation*}
m(x,\mathbf{b}) =\mathbb{E} \bigl[ e^{-\delta _{0} \tau _{\mathbf{b}}} \, w\bigl(X_{\tau _{\mathbf{b}}-}^{\mathbf{b}}(x),
\bigl\llvert X_{\tau _{\mathbf{b}}} ^{\mathbf{b}}(x) \bigr\rrvert \bigr)\, \Eins (
\tau _{\mathbf{b}}<\infty ) \,|\, X _{0}^{\mathbf{b}}(x)=x
\bigr], \quad x\ge 0,
\end{equation*}
where $w(\cdot ,\cdot )$ is a bounded non-negative measurable function,
$X_{\tau _{\mathbf{b}}-}^{\mathbf{b}}(x)$ is the surplus immediately
before ruin and $|X_{\tau _{\mathbf{b}}}^{\mathbf{b}}(x)|$ is a deficit
at ruin. Note that if $w(\cdot ,\cdot ) \equiv 1$ and $\delta _{0}=0$,
then $m(x,\mathbf{b})$ becomes the infinite-horizon ruin probability
\begin{equation*}
\psi (x,\mathbf{b}) =\mathbb{E}\bigl[\Eins (\tau _{\mathbf{b}}<\infty ) \,|\,
X_{0}^{\mathbf{b}}(x)=x\bigr].
\end{equation*}

For $\delta >0$, the expected discounted dividend payments\index{dividend ! payments} until ruin
are defined by
\begin{equation*}
v(x,\mathbf{b}) =\mathbb{E} \Biggl[ \int_{0}^{\tau _{\mathbf{b}}}
e ^{-\delta t}\, \mathrm{d}D_{t} \,|\, X_{0}^{\mathbf{b}}(x)=x
\Biggr], \quad x\ge 0.
\end{equation*}

For simplicity of notation, we also write $m(x)$, $\psi (x)$ and
$v(x)$ instead of $m(x,\mathbf{b})$, $\psi (x,\mathbf{b})$ and
$v(x,\mathbf{b})$, respectively. For all $1\le j\le k$ and $b_{j-1}
\le x\le b_{j}$, we also set $m_{j}(x)=m(x,\mathbf{b})$, $\psi _{j}(x)=
\psi (x,\mathbf{b})$ and $v_{j}(x)=v(x,\mathbf{b})$. Thus, the functions
$m_{j}(x)$, $\psi _{j}(x)$ and $v_{j}(x)$ are defined on $[b_{j-1},b
_{j}]$, and we have $m_{j}(b_{j})=m_{j+1}(b_{j})$, $\psi _{j}(b_{j})=
\psi _{j+1}(b_{j})$ and $v_{j}(b_{j})=v_{j+1}(b_{j})$ for all
$1\le j\le k-1$.

%r1 #&#
\begin{remark}
\label{rem:1}Note that although we %imply
consider the interval $[b_{k-1},\infty )$ instead of
$[b_{j-1},b_{j}]$ if $j=k$, for the sake of convenience and compactness,
here and subsequently, we do write $[b_{j-1},b_{j}]$ for all
$1\le j\le k$. In addition, in what follows, the derivatives of all
functions at the ends of the closed intervals $[b_{j-1},b_{j}]$ are
assumed to be one-sided.
\end{remark}

%s3 #&#
\section{Piecewise integro-differential equation for the Gerber--Shiu function\index{Gerber--Shiu function}}%
\label{sec:3}
%
%t1 #&#
\begin{thm}
\label{thm:1}
Let the surplus process\index{surplus process} $  (X_{t}^{b}(x)  )_{t\ge 0}$ %follow
be defined by \eqref{eq:2} under the above assumptions, and let $F_{Y}(y)$ and
$w(u_{1},u_{2})$ be continuous on $\mathbb{R}_{+}$ and $\mathbb{R}
_{+}^{2}$, respectively. Then the function $m(x)$ is differentiable on
the intervals $[b_{j-1},b_{j}]$ for all $1\le j\le k$ and satisfies the
piecewise integro-differential equation
%
%e4 #&#
\begin{align}
\label{eq:4} %
%\begin{split}
&d_{j}
m'(x)+(\lambda +\bar{\lambda }+\delta _{0})m(x) =
\lambda \int_{0}^{x} m(x-y)\,
\mathrm{d}F_{Y}(y)\nonumber
\\[-2pt]
&\quad +\lambda \int_{x}^{\infty } w(x,y-x)\,
\mathrm{d}F_{Y}(y) +\bar{ \lambda }\int_{0}^{\infty }
m(x+y)\, \mathrm{d}F_{\bar{Y}}(y), \quad  x\in [b_{j-1},b_{j}].
%\end{split} %
\end{align}
\end{thm}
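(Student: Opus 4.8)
The plan is to use the standard infinitesimal (first-step) analysis over a short time interval $[0,\Delta t]$ and then let $\Delta t\to 0$. Fix $j$ and a point $x\in(b_{j-1},b_{j})$. For $\Delta t$ small enough that $x$ stays strictly inside the $j$th layer on $[0,\Delta t]$ in the absence of jumps, I would condition on what happens during $[0,\Delta t]$: with probability $1-(\lambda+\bar\lambda)\Delta t+o(\Delta t)$ there is neither a claim nor a premium, in which case the surplus moves deterministically from $x$ to $x-d_j\Delta t+o(\Delta t)$, still in layer $j$; with probability $\lambda\Delta t+o(\Delta t)$ a claim of size $Y$ occurs (and we split according to $Y\le x$, giving a contribution $m(x-Y)$, or $Y>x$, giving the penalty term $w(x,Y-x)$); with probability $\bar\lambda\Delta t+o(\Delta t)$ a premium of size $\bar Y$ occurs, contributing $m(x+\bar Y)$; and the probability of two or more events is $o(\Delta t)$. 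Accounting for the discount factor $e^{-\delta_0\Delta t}=1-\delta_0\Delta t+o(\Delta t)$ and using the strong Markov property at the first event time, I obtain
\begin{align*}
m(x) &= e^{-\delta_0\Delta t}\Bigl[(1-(\lambda+\bar\lambda)\Delta t)\,m(x-d_j\Delta t) \\
&\quad +\lambda\Delta t\Bigl(\textstyle\int_0^x m(x-y)\,\mathrm{d}F_Y(y)+\int_x^\infty w(x,y-x)\,\mathrm{d}F_Y(y)\Bigr) \\
&\quad +\bar\lambda\Delta t\textstyle\int_0^\infty m(x+y)\,\mathrm{d}F_{\bar Y}(y)\Bigr]+o(\Delta t).
\end{align*}

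Next I would rearrange this identity, subtract $m(x)$ from both sides, divide by $\Delta t$, and pass to the limit $\Delta t\to 0^+$. The terms $(\lambda+\bar\lambda+\delta_0)m(x)$ appear from the linear-in-$\Delta t$ part of the discount and no-event probabilities, and the integral terms survive with their coefficients $\lambda$ and $\bar\lambda$. The one genuinely delicate point is the term $\bigl(m(x-d_j\Delta t)-m(x)\bigr)/\Delta t$: this forces the right-hand derivative $m'(x)$ to exist and produces $-d_j m'(x)$ after moving it to the left. To make this rigorous rather than formal, I would first argue that $m$ is continuous on $[b_{j-1},b_j]$ — which follows from the boundedness of $w$, the dominated convergence theorem applied to the integral representation, and continuity of $F_Y$ — so that the right-hand side of the displayed identity, viewed as a function of $x$, is continuous; the difference-quotient identity then shows the one-sided derivative exists at every interior point and equals a continuous function, hence $m$ is $C^1$ on $(b_{j-1},b_j)$, and a separate one-sided argument at the endpoints (using Remark~\ref{rem:1}) extends differentiability to the closed interval $[b_{j-1},b_j]$.

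I expect the main obstacle to be precisely the justification that the limit of the difference quotient exists — i.e., the regularity bootstrap — rather than the bookkeeping of the infinitesimal generator, which is routine. A clean way to handle it is to rewrite the conditioning so that the first term is $m$ evaluated at the deterministic post-drift position and to observe that, after dividing by $\Delta t$ and using continuity of the remaining (integral and constant) terms, the quotient $\bigl(m(x)-m(x-d_j\Delta t)\bigr)/(d_j\Delta t)$ converges to a limit that is continuous in $x$; continuity of a function together with existence of a continuous one-sided derivative everywhere on an interval yields $C^1$ on that interval. The continuity of $w(u_1,u_2)$ on $\mathbb{R}_+^2$ is used to guarantee that $x\mapsto\int_x^\infty w(x,y-x)\,\mathrm{d}F_Y(y)$ is continuous, and the continuity of $F_Y$ ensures no mass is lost at the moving endpoint $y=x$ in splitting the claim integral. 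Once differentiability is established, collecting the $O(\Delta t)$ coefficients gives exactly \eqref{eq:4}, valid for $x\in[b_{j-1},b_j]$ and each $1\le j\le k$ (with $d_k$ understood via the same convention and the unbounded top layer handled as in Remark~\ref{rem:1}).
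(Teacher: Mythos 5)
Your route is genuinely different from the paper's. You propose the classical infinitesimal (generator) argument: condition on the events in a short window $[0,\Delta t]$, expand to first order, divide by $\Delta t$ and pass to the limit. The paper instead conditions on the time and size of the \emph{first jump} over its entire (exponential) distribution, which yields an explicit representation $m(x)=I_j(x)+I_{j-1}(x)+\cdots+I_1(x)+I_0(x)$ as a sum of integrals over the deterministic drift path through the layers; after a change of variables each $I_i$ becomes an integral of a continuous function of the upper limit, so continuity and differentiability of $m$ on $[b_{j-1},b_j]$ are read off directly, and differentiating the representation gives \eqref{eq:4} with no limiting argument. What your approach buys is lighter bookkeeping: you never need the auxiliary times $a_j(x),\ldots,a_1(x)$ or the layer-by-layer decomposition of the pre-jump drift, since for small $\Delta t$ the process stays in the $j$th layer. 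What the paper's approach buys is precisely the regularity that your approach must supply separately.

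That is where the genuine gap sits. Your plan derives continuity of $m$ from ``the dominated convergence theorem applied to the integral representation,'' but the infinitesimal identity you write down is not an integral representation of $m$ --- it only relates $m(x)$ to $m(x-d_j\Delta t)$ plus $O(\Delta t)$ terms, and no amount of dominated convergence applied to it yields continuity of $m$ without further input. The representation you would need is exactly the paper's equation \eqref{eq:5} (first-jump decomposition over the full jump-time distribution), so your continuity step silently presupposes the construction your approach was meant to avoid. Once continuity is actually established, the rest of your bootstrap is sound: the difference-quotient identity gives a one-sided derivative equal to a continuous function of $x$, and a continuous function with a continuous one-sided derivative everywhere on an interval is $C^1$ there; likewise the conditional pre-claim surplus $x-d_js$ versus $x$ is absorbed into $o(\Delta t)$ only because $m$ and the integral terms are continuous. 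So either prove continuity independently (e.g.\ by writing out the first-jump decomposition, at which point you have essentially reproduced the paper's proof) or accept that the generator argument here is formal until that step is filled in.
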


\begin{proof}
We now fix any $j$ such that $1\le j\le k$ and deal with the case
$x\in [b_{j-1},b_{j}]$. For all $x\in [b_{j-1},b_{j}]$, we define the
following functions:
\begin{gather*}
a_{1}(x)=(x-b_{j-1})/d_{j}+(b_{j-1}-b_{j-2})/d_{j-1}
+\cdots +(b_{2}-b _{1})/d_{2}+(b_{1}-b_{0})/d_{1},\\[-2pt]
a_{2}(x)=(x-b_{j-1})/d_{j}+(b_{j-1}-b_{j-2})/d_{j-1}+
\cdots +(b_{2}-b _{1})/d_{2},\\[-2pt]
\ldots\\[-2pt]
a_{j-1}(x)=(x-b_{j-1})/d_{j}+(b_{j-1}-b_{j-2})/d_{j-1},\\[-2pt]
a_{j}(x)=(x-b_{j-1})/d_{j}.
\end{gather*}

From these equalities we conclude that for all $x\in [b_{j-1},b_{j}]$,
the process\break  $  (X_{t}^{\mathbf{b}}(x)  )_{t\ge 0}$ up to its first
jump is in the $j$th layer if $t\in [0,a_{j}(x)]$ and in the $i$th layer
if $t\in [a_{i+1}(x),a_{i}(x)]$, where $1\le i\le j-1$. Thus, for any
$x\in [b_{j-1},b_{j}]$, the sequence $a_{j}(x)$, $a_{j-1}(x),\ldots, a_{1}(x)$ defines the times when $  (X_{t}^{\mathbf{b}}(x)
  )_{t\ge 0}$ passes through the values $b_{j-1}$, $b_{j-2},
\ldots , b_{0}$ provided that it has no jumps until those times.

It is easily seen that the time of the first jump of $  (X_{t}^{
\mathbf{b}}(x)  )_{t\ge 0}$ is exponentially distributed with mean
$1/(\lambda +\bar{\lambda })$. Considering the time and the size of the
first jump of that process and applying the law of total probability we
obtain
%
%e5 #&#
\begin{equation}
\label{eq:5} m(x)=I_{j}(x)+I_{j-1}(x)+\cdots
+I_{1}(x)+I_{0}(x), \quad x\in [b_{j-1},b
_{j}],
\end{equation}
where
\begin{equation*}
\begin{split} I_{j}(x) &=\int_{0}^{a_{j}(x)}
e^{-(\lambda +\bar{\lambda })t} \Biggl( \lambda \int_{0}^{x-d_{j} t}
e^{-\delta _{0} t}\, m (x-d_{j} t -y )\, \mathrm{d}F_{Y}(y)
\\[-2pt]
&\quad +\lambda \int_{x-d_{j} t}^{\infty }
e^{-\delta _{0} t}\, w (x-d_{j} t,\, y-x+d_{j} t )\,
\mathrm{d}F_{Y}(y)
\\[-2pt]
&\quad +\bar{\lambda }\int_{0}^{\infty }
e^{-\delta _{0} t}\, m (x-d _{j} t +y )\, \mathrm{d}F_{\bar{Y}}(y)
\Biggr) \mathrm{d}t,
\end{split} %
\end{equation*}
\begin{equation*}
\begin{split} I_{j-1}(x) &=\int_{a_{j}(x)}^{a_{j-1}(x)}
e^{-(\lambda +\bar{\lambda })t} \Biggl( \lambda \int_{0}^{b_{j-1}-d_{j-1} (t-a_{j}(x))}
e^{-\delta
_{0} t}
\\[-2pt]
& \qquad \times m \bigl(b_{j-1}-d_{j-1}
\bigl(t-a_{j}(x)\bigr) -y \bigr)\, \mathrm{d}F_{Y}(y)
\\[-2pt]
&\quad +\lambda \int_{b_{j-1}-d_{j-1}(t-\xch{a_{j}(x))}{a_{j}(x)}}^{\infty }
e^{-
\delta _{0} t}\, w \bigl(b_{j-1}-d_{j-1}
\bigl(t-a_{j}(x)\bigr),
\\[-2pt]
& \qquad y-b_{j-1}+d_{j-1} \bigl(t-a_{j}(x)
\bigr) \bigr)\, \mathrm{d}F_{Y}(y)
\\[-2pt]
&\quad +\bar{\lambda }\int_{0}^{\infty }
e^{-\delta _{0} t}\, m \bigl(b _{j-1}-d_{j-1}
\bigl(t-a_{j}(x)\bigr) +y \bigr)\, \mathrm{d}F_{\bar{Y}}(y)
\Biggr) \mathrm{d}t,\\[-2pt]
 &\qquad\qquad\qquad\qquad\qquad\ldots
\end{split} %
\end{equation*}
\begin{equation*}
\begin{split} I_{1}(x) &=\int_{a_{2}(x)}^{a_{1}(x)}
e^{-(\lambda +\bar{\lambda })t} \Biggl( \lambda \int_{0}^{b_{1}-d_{1} (t-a_{2}(x))}
e^{-\delta _{0} t}
\\[-2pt]
& \qquad \times m \bigl(b_{1}-d_{1}
\bigl(t-a_{2}(x)\bigr) -y \bigr)\, \mathrm{d}F_{Y}(y)
\\[-2pt]
&\quad +\lambda \int_{b_{1}-d_{1} (t-a_{2}(x))}^{\infty }
e^{-\delta
_{0} t}\, w \bigl(b_{1}-d_{1}
\bigl(t-a_{2}(x)\bigr),
\\[-2pt]
& \qquad y-b_{1}+d_{1} \bigl(t-a_{2}(x)
\bigr) \bigr)\, \mathrm{d}F_{Y}(y)
\\[-2pt]
&\quad +\bar{\lambda }\int_{0}^{\infty }
e^{-\delta _{0} t}\, m \bigl(b _{1}-d_{1}
\bigl(t-a_{2}(x)\bigr) +y \bigr)\, \mathrm{d}F_{\bar{Y}}(y)
\Biggr) \mathrm{d}t,
\end{split} %
\end{equation*}
\begin{equation*}
I_{0}(x)=e^{-(\lambda +\bar{\lambda }+\delta _{0})\, a_{1}(x)}\, w(0,0).
\end{equation*}

Note that the term $I_{i}(x)$, $1\le i\le j$, in \eqref{eq:5}
corresponds to the case where $  (X_{t}^{\mathbf{b}}(x)  )_{t
\ge 0}$ is in the $i$th layer when its first jump occurs, and the term
$I_{0}(x)$ corresponds to the case where there are no jumps of
$  (X_{t}^{\mathbf{b}}(x)  )_{t\ge 0}$ up to the time
$a_{1}(x)$.\looseness=1

Changing the variable $x-d_{j} t=s$ in the outer integral in the
expression for $I_{j}(x)$ yields
%
%e6 #&#
\begin{align}
\label{eq:6} %
%\begin{split}
I_{j}(x) &=
\frac{1}{d_{j}}\, e^{-(\lambda +\bar{\lambda }+\delta _{0})x/d
_{j}} \int_{b_{j-1}}^{x}
e^{(\lambda +\bar{\lambda }+\delta _{0})s/d
_{j}} \Biggl( \lambda \int_{0}^{s}
m(s-y)\, \mathrm{d}F_{Y}(y)\nonumber
\\[-2pt]
&\quad +\lambda \int_{s}^{\infty } w(s,y-s)\,
\mathrm{d}F_{Y}(y) +\bar{ \lambda }\int_{0}^{\infty }
m(s+y)\, \mathrm{d}F_{\bar{Y}}(y) \Biggr) \mathrm{d}s.
%\end{split}
%
\end{align}

Changing the variable $b_{j-1}-d_{j-1} (t-a_{j}(x))=s$ in the outer
integral in the expression for $I_{j-1}(x)$ gives
%
%e7 #&#
\begin{align}
\label{eq:7} %
%\begin{split}
I_{j-1}(x) &=
\frac{1}{d_{j-1}}\, e^{-(\lambda +\bar{\lambda }+\delta
_{0})(a_{j}(x)+b_{j-1}/d_{j-1})}\nonumber
\\[-2pt]
&\quad \times \int_{b_{j-2}}^{b_{j-1}}
e^{(\lambda +\bar{\lambda }+
\delta _{0})s/d_{j-1}} \Biggl( \lambda \int_{0}^{s}
m(s-y)\, \mathrm{d}F_{Y}(y)\nonumber
\\[-2pt]
& \qquad +\lambda \int_{s}^{\infty } w(s,y-s)\,
\mathrm{d}F_{Y}(y) +\bar{ \lambda }\int_{0}^{\infty }
m(s+y)\, \mathrm{d}F_{\bar{Y}}(y) \Biggr) \mathrm{d}s.
%\end{split}
%
\end{align}

In the same manner we change variables in all the outer integrals on the
right-hand side of \eqref{eq:5}. Finally, changing the variable
$b_{1}-d_{1} (t-a_{2}(x))=s$ in the outer integral in the expression for
$I_{1}(x)$ yields
%
%e8 #&#
\begin{align}
\label{eq:8} %
%\begin{split}
I_{1}(x) &=
\frac{1}{d_{1}}\, e^{-(\lambda +\bar{\lambda }+\delta _{0})(a
_{2}(x)+b_{1}/d_{1})} \int_{b_{0}}^{b_{1}}
e^{(\lambda +\bar{\lambda
}+\delta _{0})s/d_{1}}\nonumber
\\[-2pt]
&\quad \times \Biggl( \lambda \int_{0}^{s}
m(s-y)\, \mathrm{d}F_{Y}(y) +\lambda \int_{s}^{\infty }
w(s,y-s)\, \mathrm{d}F_{Y}(y)\nonumber
\\[-2pt]
& \qquad +\bar{\lambda }\int_{0}^{\infty } m(s+y)\,
\mathrm{d}F_{\bar{Y}}(y) \Biggr) \mathrm{d}s.
%\end{split} %
\end{align}

Thus, from the above and equality \eqref{eq:5} we deduce that
$m(x)$ is continuous on $[b_{j-1},b_{j}]$, and hence, on $\mathbb{R}
_{+}$. Therefore, \eqref{eq:6} implies that $I_{j}(x)$ is differentiable
on $[b_{j-1},b_{j}]$, and for all $x\in [b_{j-1},b_{j}]$, we get
\begin{equation*}
\begin{split} I'_{j}(x) &=-
\frac{\lambda +\bar{\lambda }+\delta _{0}}{d_{j}}\, I_{j}(x) +\frac{1}{d_{j}} \Biggl(
\lambda \int_{0}^{x} m(x-y)\, \mathrm{d}F
_{Y}(y)
\\[-2pt]
&\quad +\lambda \int_{x}^{\infty } w(x,y-x)\,
\mathrm{d}F_{Y}(y) +\bar{ \lambda }\int_{0}^{\infty }
m(x+y)\, \mathrm{d}F_{\bar{Y}}(y) \Biggr).
\end{split} %
\end{equation*}

Moreover, it is easily seen, e.g. from \eqref{eq:7} and \eqref{eq:8},
that all the functions $I_{j-1}(x), \ldots , \break I_{1}(x)$ and
$I_{0}(x)$ are differentiable on $[b_{j-1},b_{j}]$, and for all
$x\in [b_{j-1},b_{j}]$, we have
\begin{gather*}
I'_{j-1}(x)=-\frac{\lambda +\bar{\lambda }+\delta _{0}}{d_{j}}\,
I_{j-1}(x), \quad \ldots , \quad I'_{1}(x)=-
\frac{\lambda +\bar{
\lambda }+\delta _{0}}{d_{j}}\, I_{1}(x),
\\
I'_{0}(x)=-\frac{\lambda +\bar{\lambda }+\delta _{0}}{d_{j}}\,
I_{0}(x).
\end{gather*}

From \eqref{eq:5} it follows that $m(x)$ is also differentiable on
$[b_{j-1},b_{j}]$. Differentiating \eqref{eq:5} and taking into account
expressions for $I'_{j}(x)$, $I'_{j-1}(x), \ldots , I'_{1}(x)$,
$I'_{0}(x)$ we obtain
\begin{equation*}
\begin{split} &m'(x)=-\frac{\lambda +\bar{\lambda }+\delta _{0}}{d_{j}}\, m(x) +
\frac{1}{d
_{j}} \Biggl( \lambda \int_{0}^{x}
m(x-y)\, \mathrm{d}F_{Y}(y)
\\[-2pt]
&\quad +\lambda \int_{x}^{\infty } w(x,y-x)\,
\mathrm{d}F_{Y}(y) +\bar{ \lambda }\int_{0}^{\infty }
m(x+y)\, \mathrm{d}F_{\bar{Y}}(y) \Biggr), \quad x\in
[b_{j-1},b_{j}],
\end{split} %
\end{equation*}
from which equation \eqref{eq:4} follows immediately.
\end{proof}

%r2 #&#
\begin{remark}
\label{rem:2}
To solve equation \eqref{eq:4}, we use the following boundary
conditions. The first $k-1$ conditions are easily obtained from the
equality $m_{j}(b_{j})=m_{j+1}(b_{j})$ or, equivalently, $
\lim_{x\uparrow b_{j}} m(x) =\lim_{x\downarrow b_{j}} m(x)$ for all
$1\le j\le k-1$. In addition, for the ruin probability,\index{ruin probability} using standard
considerations (see, e.g.,
\cite{MiRa2016,MiRaSt2015,RoScScTe1999}) we can show that $
\lim_{x\to \infty } \psi (x) =0$ provided that the net profit condition\index{net profit condition}
holds. Finally, it is evident that $\psi (0)=1$. Although equation
\eqref{eq:4} is not solvable analytically in the general case, we can
find explicit expressions for the corresponding ruin probability\index{ruin probability} in the
case where claim and premium sizes are exponentially distributed (see
Section~\ref{sec:5}). The uniqueness of the required solution to
\eqref{eq:4} should be justified in each case.\looseness=1
\end{remark}

%r3 #&#
\begin{remark}
\label{rem:3}
In the assertion of Theorem \ref{thm:1}, we require the continuity of
$F_{Y}(y)$. Note that if $F_{Y}(y)$ has positive points of
discontinuity, then $m(x)$ may be not differentiable at some interior
points of the intervals $[b_{j-1},b_{j}]$, $1\le j\le k$ (for details,
see \cite{MiRa2016,MiRaSt2015}). Moreover, it is easily seen from
\eqref{eq:4} that $m(x)$ is not differentiable at $x=b_{j}$,
$1\le j\le k-1$, since its one-sided derivatives do not coincide at
those points.
\end{remark}

%s4 #&#
\section{Piecewise integro-differential equation for the expected discounted dividend payments\index{dividend ! payments} until ruin}%
\label{sec:4}
%
%t2 #&#
\begin{thm}
\label{thm:2}
Let the surplus process\index{surplus process} $  (X_{t}^{b}(x)  )_{t\ge 0}$ %follow
be defined by \eqref{eq:2} under the above assumptions, and let $F_{Y}(y)$ be
continuous on $\mathbb{R}_{+}$. Then the function $v(x)$ is
differentiable on the intervals $[b_{j-1},b_{j}]$ for all $1\le j
\le k$ and satisfies the piecewise integro-differential equation
%
%e9 #&#
\begin{align}
\label{eq:9} %
%\begin{split}
 &d_{j}
v'(x)+(\lambda +\bar{\lambda }+\delta )v(x) =\lambda \int
_{0} ^{x} v(x-y)\, \mathrm{d}F_{Y}(y)\nonumber
\\[-2pt]
&\quad +\bar{\lambda }\int_{0}^{\infty } v(x+y)\,
\mathrm{d}F_{
\bar{Y}}(y)+d_{j}, \quad x\in
[b_{j-1},b_{j}].
%\end{split} %
\end{align}
\end{thm}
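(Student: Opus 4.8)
The plan is to follow the proof of Theorem~\ref{thm:1}, the only new ingredient being the continuously accrued dividends. Fix $j$ with $1\le j\le k$ and take $x\in[b_{j-1},b_j]$, and reuse the functions $a_1(x),\dots,a_j(x)$ introduced there, so that $a_j(x),a_{j-1}(x),\dots,a_1(x)$ are the successive times at which the jump-free trajectory of $\bigl(X_t^{\mathbf{b}}(x)\bigr)_{t\ge 0}$ passes through the levels $b_{j-1},b_{j-2},\dots,b_0=0$. Let $r(\cdot)$ be the associated deterministic, piecewise-constant dividend rate on $[0,a_1(x)]$ (equal to $d_i$ on the subinterval corresponding to the $i$th layer), $g(t)$ the jump-free surplus at time $t$, and $T_1$ the first jump time, which is exponential with rate $\lambda+\bar\lambda$. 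I would first record that $v$ is bounded, $0\le v(x)\le\max_{1\le j\le k}\{d_j\}/\delta$ for every $x\ge 0$, which justifies the Fubini-type interchanges below.

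Splitting $\int_0^{\tau_{\mathbf{b}}}e^{-\delta t}\,\mathrm dD_t=\int_0^{T_1\wedge\tau_{\mathbf{b}}}e^{-\delta t}\,\mathrm dD_t+\int_{T_1\wedge\tau_{\mathbf{b}}}^{\tau_{\mathbf{b}}}e^{-\delta t}\,\mathrm dD_t$ and taking expectations, the first term reduces, after integrating out the law of $T_1$ (using that no drift-induced ruin can occur before $a_1(x)$ and that $\mathbb P[T_1\ge a_1(x)]=e^{-(\lambda+\bar\lambda)a_1(x)}$), to $\int_0^{a_1(x)}e^{-(\lambda+\bar\lambda+\delta)t}r(t)\,\mathrm dt$, while the second term, obtained by conditioning on the time and on the type and size of the first jump and invoking the strong Markov property — with the convention $v\equiv 0$ on $(-\infty,0)$, which is precisely where a claim overshooting $g(T_1)$ sends the surplus, so that no $w$-type term appears — equals $\int_0^{a_1(x)}(\lambda+\bar\lambda)e^{-(\lambda+\bar\lambda+\delta)t}\bigl(\frac{\lambda}{\lambda+\bar\lambda}\int_0^{g(t)}v(g(t)-y)\,\mathrm dF_Y(y)+\frac{\bar\lambda}{\lambda+\bar\lambda}\int_0^\infty v(g(t)+y)\,\mathrm dF_{\bar{Y}}(y)\bigr)\,\mathrm dt$. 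Breaking the $t$-range into the layer subintervals $[0,a_j(x)],[a_j(x),a_{j-1}(x)],\dots,[a_2(x),a_1(x)]$ yields $v(x)=\tilde I_j(x)+\tilde I_{j-1}(x)+\dots+\tilde I_1(x)$, the $i$th summand being the analogue of $I_i(x)$ from Theorem~\ref{thm:1} with $w(\cdot,\cdot)$ deleted and an extra additive constant $d_i$ placed inside the bracket; there is no analogue of $I_0(x)$, since the dividends accrued up to a drift-induced ruin are already accounted for in the first term. Carrying out the same substitutions as in \eqref{eq:6}--\eqref{eq:8} ($x-d_jt=s$ in $\tilde I_j$, and $b_i-d_i(t-a_{i+1}(x))=s$ in $\tilde I_i$ for $i\le j-1$) rewrites $\tilde I_i(x)$ as $\frac{1}{d_i}e^{-(\lambda+\bar\lambda+\delta)(\cdot)}\int e^{(\lambda+\bar\lambda+\delta)s/d_i}\bigl(d_i+\lambda\int_0^s v(s-y)\,\mathrm dF_Y(y)+\bar\lambda\int_0^\infty v(s+y)\,\mathrm dF_{\bar{Y}}(y)\bigr)\,\mathrm ds$; since $F_Y$ is continuous, each integrand is continuous in $s$, whence $v$ is continuous on $[b_{j-1},b_j]$ and hence on $\mathbb{R}_+$.

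With continuity established, each $\tilde I_i(x)$ is differentiable on $[b_{j-1},b_j]$, and exactly as in Theorem~\ref{thm:1} only $\tilde I_j$ carries $x$ as an integration limit, so that $\tilde I_j'(x)=-\frac{\lambda+\bar\lambda+\delta}{d_j}\tilde I_j(x)+\frac{1}{d_j}\bigl(d_j+\lambda\int_0^x v(x-y)\,\mathrm dF_Y(y)+\bar\lambda\int_0^\infty v(x+y)\,\mathrm dF_{\bar{Y}}(y)\bigr)$, while $\tilde I_i'(x)=-\frac{\lambda+\bar\lambda+\delta}{d_j}\tilde I_i(x)$ for $i\le j-1$ (these depend on $x$ only through an exponential prefactor $e^{-(\lambda+\bar\lambda+\delta)a_{i+1}(x)}$ with $a_{i+1}'(x)=1/d_j$). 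Summing over $i$, using $v=\sum_{i=1}^j\tilde I_i$, and multiplying by $d_j$ then gives \eqref{eq:9}. I expect the main obstacle to be purely bookkeeping: tracking the discounted dividend stream through the conditioning on $T_1$, the change of variables and the single differentiation, and checking that it collapses to exactly the constant $d_j$ on the right-hand side; the continuity/differentiability bootstrap and the role of the $v\equiv 0$ convention for overshooting claims are then routine and mirror Theorem~\ref{thm:1}.
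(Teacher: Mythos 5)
Your proposal is correct and follows essentially the same route as the paper: conditioning on the time, type and size of the first jump, computing the discounted dividends accrued before that jump (your Fubini computation of the first term reproduces the paper's closed form for $I_{1,*}(x)$ in \eqref{eq:13}), performing the same changes of variables as in \eqref{eq:14}--\eqref{eq:16}, and then bootstrapping continuity into differentiability before differentiating. The only difference is bookkeeping — you fold the constant $d_i$ into each layer-indexed integral $\tilde I_i$ rather than keeping the pre-jump dividend term $I_{1,*}$ separate as the paper does — and this does not change the substance of the argument.
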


\begin{proof}
We now fix any $j$ such that $1\le j\le k$ and deal with the case
$x\in [b_{j-1},b_{j}]$. As in the proof of Theorem~\ref{thm:1},
considering the time and the size of the first jump of $  (X_{t}
^{\mathbf{b}}(x)  )_{t\ge 0}$ and applying the law of total
probability we have
%
%e10 #&#
\begin{align}
\label{eq:10} %
%\begin{split}
v(x) &=I_{1,j}(x)
+I_{2,j}(x) +I_{1,j-1}(x) +I_{2,j-1}(x) +\cdots\nonumber
\\
&\quad +I_{1,1}(x) +I_{2,1}(x) +I_{1,0}(x),
\quad x\in [b_{j-1},b _{j}],
%\end{split} %
\end{align}
where
\begin{gather*}
I_{1,j}(x)=\int_{0}^{a_{j}(x)} (\lambda +
\bar{\lambda }) e^{-(\lambda
+\bar{\lambda })t} \Biggl( \int_{0}^{t}
d_{j} e^{-\delta s}\, \mathrm{d}s \Biggr)\mathrm{d}t,
\\
\begin{split} I_{2,j}(x) &=\int_{0}^{a_{j}(x)}
e^{-(\lambda +\bar{\lambda })t} \Biggl( \lambda \int_{0}^{x-d_{j} t}
e^{-\delta t}\, v (x-d_{j} t -y )\, \mathrm{d}F_{Y}(y)
\\
&\quad +\bar{\lambda }\int_{0}^{\infty }
e^{-\delta t}\, v (x-d _{j} t +y )\, \mathrm{d}F_{\bar{Y}}(y)
\Biggr) \mathrm{d}t,
\end{split} %
\\
I_{1,j-1}(x)=\int_{a_{j}(x)}^{a_{j-1}(x)}\! (
\lambda +\bar{\lambda }) e^{-(\lambda +\bar{\lambda })t} \Biggl( \int_{0}^{a_{j}(x)}
d_{j} e ^{-\delta s}\, \mathrm{d}s +\!\int
_{a_{j}(x)}^{t} d_{j-1} e^{-\delta
s}\,
\mathrm{d}s \Biggr)\mathrm{d}t,
\\
\begin{split} I_{2,j-1}(x) &=\int_{a_{j}(x)}^{a_{j-1}(x)}
e^{-(\lambda +\bar{\lambda
})t} \Biggl( \lambda \int_{0}^{b_{j-1}-d_{j-1} (t-a_{j}(x))}
e^{-
\delta t}
\\
& \qquad \times v \bigl(b_{j-1}-d_{j-1}
\bigl(t-a_{j}(x)\bigr) -y \bigr)\, \mathrm{d}F_{Y}(y)
\\
&\quad +\bar{\lambda }\int_{0}^{\infty }
e^{-\delta t}\, v \bigl(b _{j-1}-d_{j-1}
\bigl(t-a_{j}(x)\bigr) +y \bigr)\, \mathrm{d}F_{\bar{Y}}(y)
\Biggr) \mathrm{d}t,
\end{split} %
\\
\ldots
\\
\begin{split} I_{1,1}(x) &=\int_{a_{2}(x)}^{a_{1}(x)}
\! (\lambda +\bar{\lambda }) e ^{-(\lambda +\bar{\lambda })t} \Biggl( \int_{0}^{a_{j}(x)}
d_{j} e ^{-\delta s}\, \mathrm{d}s +\!\int
_{a_{j}(x)}^{a_{j-1}(x)} d_{j-1} e ^{-\delta s}
\, \mathrm{d}s +\cdots
\\
&\quad +\int_{a_{3}(x)}^{a_{2}(x)} d_{2}
e^{-\delta s}\, \mathrm{d}s +\int_{a_{2}(x)}^{t}
d_{1} e^{-\delta s}\, \mathrm{d}s \Biggr) \mathrm{d}t,
\end{split} %
\end{gather*}
\begin{gather*}
\begin{split} I_{2,1}(x) &=\int_{a_{2}(x)}^{a_{1}(x)}
e^{-(\lambda +\bar{\lambda })t} \Biggl( \lambda \int_{0}^{b_{1}-d_{1} (t-a_{2}(x))}
e^{-\delta t}
\\[-2pt]
& \qquad \times v \bigl(b_{1}-d_{1}
\bigl(t-a_{2}(x)\bigr) -y \bigr)\, \mathrm{d}F_{Y}(y)
\\[-2pt]
&\quad +\bar{\lambda }\int_{0}^{\infty }
e^{-\delta t}\, v \bigl(b _{1}-d_{1}
\bigl(t-a_{2}(x)\bigr) +y \bigr)\, \mathrm{d}F_{\bar{Y}}(y)
\Biggr) \mathrm{d}t,
\end{split} %
\\[-2pt]
\begin{split} I_{1,0}(x) &=\int_{a_{1}(x)}^{\infty }
\! (\lambda +\bar{\lambda }) e ^{-(\lambda +\bar{\lambda })t} \Biggl( \int_{0}^{a_{j}(x)}
d_{j} e ^{-\delta s}\, \mathrm{d}s +\!\int
_{a_{j}(x)}^{a_{j-1}(x)} d_{j-1} e ^{-\delta s}
\, \mathrm{d}s +\cdots
\\[-2pt]
&\quad +\int_{a_{3}(x)}^{a_{2}(x)} d_{2}
e^{-\delta s}\, \mathrm{d}s +\int_{a_{2}(x)}^{a_{1}(x)}
d_{1} e^{-\delta s}\, \mathrm{d}s \Biggr) \mathrm{d}t,
\end{split} %
\end{gather*}
and the functions $a_{1}(x)$, $a_{2}(x), \ldots ,a_{j}(x)$ are
defined in the proof of Theorem~\ref{thm:1}.

Note that the terms $I_{1,i}(x)$ and $I_{2,i}(x)$, $1\le i\le j$, in
\eqref{eq:10} correspond to the case where $  (X_{t}^{\mathbf{b}}(x)
  )_{t\ge 0}$ is in the $i$th layer when its first jump occurs, and
the term $I_{1,0}(x)$ corresponds to the case where there are no jumps
of $  (X_{t}^{\mathbf{b}}(x)  )_{t\ge 0}$ up to the time
$a_{1}(x)$. The terms $I_{1,i}(x)$, $0\le i\le j$, are equal to the
discounted dividend payments\index{dividend ! payments} until the first jump of $  (X_{t}^{
\mathbf{b}}(x)  )_{t\ge 0}$ provided that the process is in the
$i$th layer, whereas the terms $I_{2,i}(x)$, $1\le i\le j$, are equal
to the corresponding expected discounted dividend payments\index{dividend ! payments} after that
time.

Next, we set
%
%e11 #&#
\begin{equation}
\label{eq:11} I_{1,*}(x)=I_{1,j}(x)
+I_{1,j-1}(x) +\cdots +I_{1,1}(x) +I_{1,0}(x),
\quad x\in [b_{j-1},b_{j}].
\end{equation}

Thus, $I_{1,*}(x)$ describes the expected discounted dividend payments\index{dividend ! payments}
until the first jump of $  (X_{t}^{\mathbf{b}}(x)  )_{t\ge 0}$.

Rearranging terms in the expression for $I_{1,*}(x)$ gives
%
%e12 #&#
\begin{align}
\label{eq:12} %
%\begin{split}
I_{1,*}(x) &=(\lambda +
\bar{\lambda }) \Biggl(\, \int_{0}^{a_{j}(x)} e
^{-(\lambda +\bar{\lambda })t} \Biggl( \int_{0}^{t}
d_{j} e^{-\delta
s}\, \mathrm{d}s \Biggr)\mathrm{d}t\nonumber
\\[-2pt]
&\quad +\int_{a_{j}(x)}^{a_{j-1}(x)} e^{-(\lambda +\bar{\lambda })t}
\Biggl( \int_{a_{j}(x)}^{t} d_{j-1}
e^{-\delta s}\, \mathrm{d}s \Biggr)\mathrm{d}t +\cdots\nonumber
\\[-2pt]
&\quad +\int_{a_{2}(x)}^{a_{1}(x)} e^{-(\lambda +\bar{\lambda })t}
\Biggl( \int_{a_{2}(x)}^{t} d_{1}
e^{-\delta s}\, \mathrm{d}s \Biggr) \mathrm{d}t\nonumber
\\[-2pt]
&\quad +\int_{a_{j}(x)}^{\infty } e^{-(\lambda +\bar{\lambda })t}
\Biggl( \int_{0}^{a_{j}(x)} d_{j}
e^{-\delta s}\, \mathrm{d}s \Biggr) \mathrm{d}t\nonumber
\\[-2pt]
&\quad +\int_{a_{j-1}(x)}^{\infty } e^{-(\lambda +\bar{\lambda })t}
\Biggl( \int_{a_{j}(x)}^{a_{j-1}(x)} d_{j-1}
e^{-\delta s}\, \mathrm{d}s \Biggr)\mathrm{d}t +\cdots\nonumber
\\[-2pt]
&\quad +\int_{a_{1}(x)}^{\infty } e^{-(\lambda +\bar{\lambda })t}
\Biggl( \int_{a_{2}(x)}^{a_{1}(x)} d_{1}
e^{-\delta s}\, \mathrm{d}s \Biggr) \mathrm{d}t \, \Biggr).
%\end{split}
%
\end{align}

Taking all the integrals on the right-hand side of \eqref{eq:12} and
simplifying the resulting expression we get
%
%e13 #&#
\begin{align}
\label{eq:13} %
%\begin{split}
I_{1,*}(x) &=
\frac{d_{j}}{\lambda +\bar{\lambda }+\delta }\, \bigl(1-e ^{-(\lambda +\bar{\lambda }+\delta )\,a_{j}(x)} \bigr)\nonumber
\\
&\quad +\frac{d_{j-1}}{\lambda +\bar{\lambda }+\delta }\, \bigl(e ^{-(\lambda +\bar{\lambda }+\delta )\,a_{j}(x)} -e^{-(\lambda +\bar{
\lambda }+\delta )\,a_{j-1}(x)}
\bigr) +\cdots\nonumber
\\
&\quad +\frac{d_{1}}{\lambda +\bar{\lambda }+\delta }\, \bigl(e^{-(
\lambda +\bar{\lambda }+\delta )\,a_{2}(x)} -e^{-(\lambda +\bar{
\lambda }+\delta )\,a_{1}(x)}
\bigr).
%\end{split} %
\end{align}

Changing the variable $x-d_{j} t=s$ in the outer integral in the
expression for $I_{2,j}(x)$ gives
%
%e14 #&#
\begin{align}
\label{eq:14} %
%\begin{split}
I_{2,j}(x) &=
\frac{1}{d_{j}}\, e^{-(\lambda +\bar{\lambda }+\delta )x/d
_{j}} \int_{b_{j-1}}^{x}
e^{(\lambda +\bar{\lambda }+\delta )s/d_{j}}\nonumber
\\[-2pt]
&\quad \times \Biggl( \lambda \int_{0}^{s}
v(s-y)\, \mathrm{d}F_{Y}(y) +\bar{\lambda }\int
_{0}^{\infty } v(s+y)\, \mathrm{d}F_{\bar{Y}}(y)
\Biggr) \mathrm{d}s.
%\end{split} %
\end{align}

Likewise, changing the variable $b_{j-1}-d_{j-1} (t-a_{j}(x))=s$ in the
outer integral in the expression for $I_{2,j-1}(x)$ yields
%
%e15 #&#
\begin{align}
\label{eq:15} %
%\begin{split}
I_{2,j-1}(x) &=
\frac{1}{d_{j-1}}\, e^{-(\lambda +\bar{\lambda }+\delta
)(a_{j}(x)+b_{j-1}/d_{j-1})} \int_{b_{j-2}}^{b_{j-1}}
e^{(\lambda +\bar{
\lambda }+\delta )s/d_{j-1}}\nonumber
\\[-2pt]
& \qquad \times \Biggl( \lambda \int_{0}^{s}
v(s-y)\, \mathrm{d}F_{Y}(y) +\bar{ \lambda }\int
_{0}^{\infty } v(s+y)\, \mathrm{d}F_{\bar{Y}}(y)
\Biggr) \mathrm{d}s.
%\end{split} %
\end{align}

Next, in the same manner we change variables in all those outer
integrals on the right-hand side of \eqref{eq:10} that are not included
in the sum \eqref{eq:11}. Eventually, changing the variable
$b_{1}-d_{1} (t-a_{2}(x))=s$ in the outer integral in the expression for
$I_{2,1}(x)$ we obtain
%
%e16 #&#
\begin{align}
\label{eq:16} %
%\begin{split}
I_{2,1}(x) &=
\frac{1}{d_{1}}\, e^{-(\lambda +\bar{\lambda }+\delta )(a
_{2}(x)+b_{1}/d_{1})} \int_{b_{0}}^{b_{1}}
e^{(\lambda +\bar{\lambda
}+\delta )s/d_{1}}\nonumber
\\[-2pt]
&\quad \times \Biggl( \lambda \int_{0}^{s}
v(s-y)\, \mathrm{d}F_{Y}(y) +\bar{\lambda }\int
_{0}^{\infty } v(s+y)\, \mathrm{d}F_{\bar{Y}}(y)
\Biggr) \mathrm{d}s.
%\end{split} %
\end{align}

From \eqref{eq:13} it follows immediately that $I'_{1,*}(x)$ is
differentiable on $[b_{j-1},b_{j}]$, and for all $x\in [b_{j-1},b_{j}]$,
we get
\begin{equation*}
I'_{1,*}(x)=-\frac{\lambda +\bar{\lambda }+\delta }{d_{j}}\, \biggl( I
_{1,*}(x) -\frac{d_{j}}{\lambda +\bar{\lambda }+\delta } \biggr).
\end{equation*}

Next, from the above and equality \eqref{eq:10} we conclude that
$v(x)$ is continuous on $[b_{j-1},b_{j}]$, and hence, on $\mathbb{R}
_{+}$. Hence, \eqref{eq:14} implies that $I_{2,j}(x)$ is differentiable
on $[b_{j-1},b_{j}]$, and for all $x\in [b_{j-1},b_{j}]$, we have
\begin{equation*}
\begin{split}
I'_{2,j}(x) &=-
\frac{\lambda +\bar{\lambda }+\delta }{d_{j}}\, I_{2,j}(x)
\\[-2pt]
&\quad +\frac{1}{d_{j}} \Biggl( \lambda \int_{0}^{x}
v(x-y)\, \mathrm{d}F_{Y}(y) +\bar{\lambda }\int
_{0}^{\infty } v(x+y)\, \mathrm{d}F_{\bar{Y}}(y)
\Biggr).
\end{split} %
\end{equation*}

Furthermore, it follows immediately, e.g. from \eqref{eq:15} and
\eqref{eq:16}, that all the functions $I_{2,j-1}(x),\ldots , I_{2,1}(x)$
are differentiable on $[b_{j-1},b_{j}]$, and for all
$x\in [b_{j-1},b_{j}]$, we obtain
\begin{equation*}
I'_{2,j-1}(x)=-\frac{\lambda +\bar{\lambda }+\delta }{d_{j}}\,
I_{2,j-1}(x), \quad \ldots , \quad I'_{2,1}(x)=-
\frac{\lambda +\bar{
\lambda }+\delta }{d_{j}}\, I_{2,1}(x).
\end{equation*}

By \eqref{eq:10}, we conclude that $v(x)$ is also differentiable on
$[b_{j-1},b_{j}]$. Differentiating \eqref{eq:10} and taking into account
expressions for $I'_{1,*}(x)$, $I'_{2,j}(x)$, $I'_{2,j-1}(x),
\ldots , I'_{2,1}(x)$ we get
\begin{equation*}
\begin{split}
v'(x) &=-\frac{\lambda +\bar{\lambda }+\delta }{d_{j}}\, v(x) +1
+\frac{1}{d
_{j}} \Biggl( \lambda \int_{0}^{x}
v(x-y)\, \mathrm{d}F_{Y}(y)
\\[-2pt]
&\quad +\bar{\lambda }\int_{0}^{\infty } v(x+y)\,
\mathrm{d}F_{
\bar{Y}}(y) \Biggr), \quad x\in [b_{j-1},b_{j}],
\end{split} %
\end{equation*}
which immediately yields equation \eqref{eq:9}.
\end{proof}

%r4 #&#
\begin{remark}
\label{rem:4}
To solve equation \eqref{eq:9}, we obtain the first $k-1$ boundary
conditions from the equality $v_{j}(b_{j})=v_{j+1}(b_{j})$ or,
equivalently, $\lim_{x\uparrow b_{j}} v(x) =\lim_{x\downarrow b_{j}}
v(x)$ for all $1\le j\le k-1$. Moreover, if the net profit condition\index{net profit condition}
holds, applying arguments similar to those in
\cite[p.~70]{Sc2008} we can show that $\lim_{x\to \infty } v(x) =d
_{k}/{\delta }$. Lastly, it is easily seen that $v(0)=0$. The uniqueness
of the required solution to \eqref{eq:9} should be justified in each
case. Explicit expressions for $v(x)$ in the case where claim and
premium sizes are exponentially distributed are given in
\xch{Section~\ref{sec:5}.}{Section~\ref{sec:5}).}
\end{remark}

%r5 #&#
\begin{remark}
\label{rem:5}
If $F_{Y}(y)$ has positive points of discontinuity, then $v(x)$ may be
not differentiable at some interior points of the intervals
$[b_{j-1},b_{j}]$, $1\le j\le k$. Furthermore, from \eqref{eq:9} we
deduce that $v(x)$ is not differentiable at $x=b_{j}$, $1\le j\le k-1$.
\end{remark}

%s5 #&#
\section{Exponentially distributed claim and premium sizes}%
\label{sec:5}
In this section, we concentrate on the case where claim and premium
sizes are exponentially distributed, i.e.
%
%e17 #&#
\begin{equation}
\label{eq:17} f_{Y}(y) =\frac{1}{\mu }\,
e^{-y/\mu } \quad \text{and} \quad f_{
\bar{Y}}(y) =
\frac{1}{\bar{\mu }}\, e^{-y/\bar{\mu }}, \quad y\ge 0.
\end{equation}

%s5.1 #&#
\subsection{Explicit formulas for the ruin probability\index{ruin  probability}}%
\label{sec:5.1}
Let now $w(\cdot ,\cdot ) \equiv 1$ and $\delta _{0}=0$. Taking into
account \eqref{eq:17}, equation \eqref{eq:4} for the ruin probability\index{ruin probability} can
be written as
%
%e18 #&#
\begin{align}
\label{eq:18} %
%\begin{split}
&d_{j} \psi
'(x)+(\lambda +\bar{\lambda })\psi (x)\nonumber
\\[-2pt]
&\quad =\frac{\lambda }{\mu }\, e^{-x/\mu } \int_{0}^{x}
\psi (u)e ^{u/\mu }\, \mathrm{d}u+ \lambda e^{-x/\mu } +
\frac{\bar{\lambda }}{\bar{
\mu }}\, e^{x/\bar{\mu }} \int_{x}^{\infty }
\psi (u)e^{-u/\bar{
\mu }}\, \mathrm{d}u
%\end{split} %
\end{align}
for all $x\in [b_{j-1},b_{j}]$ and $1\le j\le k$.

We now reduce piecewise integro-differential equation \eqref{eq:18} to
a piecewise linear differential equation with constant coefficients.

%l1 #&#
\begin{lemma}
\label{lem:1}
Let the surplus process\index{surplus process} $  (X_{t}^{\mathbf{b}}(x)  )_{t\ge 0}$
%follow
be defined by \eqref{eq:2} under the above assumptions, and let claim and
premium sizes be exponentially distributed with means $\mu $ and
$\bar{\mu }$, respectively. Then $\psi (x)$ is a solution to the
piecewise differential equation
%
%e19 #&#
\begin{equation}
\label{eq:19} d_{j}\mu \bar{\mu }\psi '''(x)
+ \bigl( d_{j}(\bar{\mu }-\mu ) +\mu \bar{ \mu }(\lambda +\bar{
\lambda }) \bigr)\psi ''(x) +(\bar{\lambda }\bar{
\mu }-\lambda \mu -d_{j})\psi '(x)=0
\end{equation}
for all $x\in [b_{j-1},b_{j}]$ and $1\le j\le k$.
\end{lemma}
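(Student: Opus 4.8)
The plan is to turn the piecewise integro-differential equation \eqref{eq:18} into a purely differential one by differentiating away the two integral terms. Before doing so I would record the regularity we need: since $\psi(x)=m(x)$ with $w\equiv1$ and $\delta_0=0$ and $F_Y$ is continuous, Theorem~\ref{thm:1} gives that $\psi$ is differentiable on each $[b_{j-1},b_j]$, and clearly $0\le\psi\le1$, so the right-hand side of \eqref{eq:18} is itself differentiable there; bootstrapping this argument twice more shows $\psi\in C^3$ on each layer, which is what \eqref{eq:19} asserts. Boundedness of $\psi$ also guarantees the convergence of $\int_x^\infty\psi(u)e^{-u/\bar\mu}\,\mathrm{d}u$ and legitimizes differentiation under the integral sign.

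The key point is that the two integral terms carry different exponential kernels. Put $A(x)=\frac{\lambda}{\mu}e^{-x/\mu}\int_0^x\psi(u)e^{u/\mu}\,\mathrm{d}u$ and $B(x)=\frac{\bar\lambda}{\bar\mu}e^{x/\bar\mu}\int_x^\infty\psi(u)e^{-u/\bar\mu}\,\mathrm{d}u$; then $A'+\frac1\mu A=\frac\lambda\mu\psi$ and $B'-\frac1{\bar\mu}B=-\frac{\bar\lambda}{\bar\mu}\psi$, while the remaining term $\lambda e^{-x/\mu}$ is annihilated by $D+\frac1\mu$. Hence applying the constant-coefficient operator $L=\bigl(D+\frac1\mu\bigr)\bigl(D-\frac1{\bar\mu}\bigr)$ to both sides of \eqref{eq:18} eliminates both integrals simultaneously: because the two first-order factors commute, $LA=\bigl(D-\frac1{\bar\mu}\bigr)\bigl(\frac\lambda\mu\psi\bigr)$, $LB=\bigl(D+\frac1\mu\bigr)\bigl(-\frac{\bar\lambda}{\bar\mu}\psi\bigr)$, and $L\bigl(\lambda e^{-x/\mu}\bigr)=0$, so the right-hand side collapses to a combination of $\psi$ and $\psi'$ only.

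It then remains to expand $L\bigl(d_j\psi'+(\lambda+\bar\lambda)\psi\bigr)$ on the left using $L=D^2+\bigl(\frac1\mu-\frac1{\bar\mu}\bigr)D-\frac1{\mu\bar\mu}$, and to move the $\psi$- and $\psi'$-terms produced by $LA$ and $LB$ to the left. One checks that the coefficient of $\psi$ cancels identically — both sides contribute $-\frac{\lambda+\bar\lambda}{\mu\bar\mu}\psi$ — so only a third-order equation in $\psi',\psi'',\psi'''$ survives, and multiplying through by $\mu\bar\mu$ produces exactly \eqref{eq:19}. The only genuinely delicate step is the regularity bootstrap together with the justification of differentiating the improper integral in $B$; once that is in place, the derivation is a routine manipulation of constant-coefficient differential operators.
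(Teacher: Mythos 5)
Your proposal is correct and is essentially the paper's own argument: the paper performs exactly the annihilation you describe, only sequentially rather than in composed-operator form (differentiating \eqref{eq:18} and adding $\mu$ times it amounts to applying $\mu\bigl(D+\tfrac1\mu\bigr)$, which removes the $e^{-x/\mu}$-kernel terms, and then differentiating the result and adding $-\bar\mu$ times it applies $-\bar\mu\bigl(D-\tfrac1{\bar\mu}\bigr)$, removing the $e^{x/\bar\mu}$-kernel term), together with the same two-step regularity bootstrap. Your operator computation, including the cancellation of the $\psi$-coefficient and the resulting coefficient $\bar\lambda\bar\mu-\lambda\mu-d_j$ of $\psi'$, checks out and reproduces \eqref{eq:19} exactly.
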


\begin{proof}
It is easily seen that the right-hand side of \eqref{eq:18} is
differentiable on $[b_{j-1},b_{j}]$. Therefore, $\psi (x)$ is twice
differentiable on $[b_{j-1},b_{j}]$. Differentiating \eqref{eq:18} gives
%
%e20 #&#
\begin{align}
\label{eq:20} %
%\begin{split}
&d_{j} \psi
''(x) +(\lambda +\bar{\lambda })\psi
'(x) =-\frac{1}{
\mu } \Biggl( \frac{\lambda }{\mu }\,
e^{-x/\mu } \int_{0}^{x} \psi (u)e
^{u/\mu }\, \mathrm{d}u +\lambda e^{-x/\mu } \Biggr)\nonumber
\\[-2pt]
&\quad +\frac{\bar{\lambda }}{\bar{\mu }^{2}}\, e^{x/\bar{\mu }} \int_{x}^{\infty }
\psi (u) e^{-u/\bar{\mu }}\, \mathrm{d}u + \biggl( \frac{
\lambda }{\mu } -
\frac{\bar{\lambda }}{\bar{\mu }} \biggr)\psi (x), \quad x\in [b_{j-1},b_{j}].
%\end{split} %
\end{align}

Multiplying \eqref{eq:20} by $\mu $ and adding \eqref{eq:18} we get
%
%e21 #&#
\begin{align}
\label{eq:21} %
%\begin{split}
&d_{j}\mu \psi
''(x) + \bigl(d_{j}+\mu (\lambda +
\bar{\lambda }) \bigr) \psi '(x) +\bar{\lambda } \biggl(1+
\frac{\mu }{\bar{\mu }} \biggr) \psi (x)\nonumber
\\[-2pt]
&\quad =\frac{\bar{\lambda }}{\bar{\mu }} \biggl(1+\frac{\mu }{\bar{
\mu }} \biggr)
e^{x/\bar{\mu }} \int_{x}^{\infty } \psi (u)
e^{-u/\bar{
\mu }}\, \mathrm{d}u, \quad x\in [b_{j-1},b_{j}].
%\end{split} %
\end{align}

From \eqref{eq:21} it follows that $\psi (x)$ has the third derivative
on $x\in [b_{j-1},b_{j}]$. Differentiating \eqref{eq:21} yields
%
%e22 #&#
\begin{align}
\label{eq:22} %
%\begin{split}
&d_{j}\mu \psi
'''(x) + \bigl(d_{j}+
\mu (\lambda +\bar{\lambda }) \bigr)\psi ''(x) +
\bar{\lambda } \biggl(1+\frac{\mu }{\bar{\mu }} \biggr)\psi '(x)\nonumber
\\[-2pt]
&=\frac{\bar{\lambda }}{\bar{\mu }^{2}} \biggl(1+\frac{\mu }{\bar{
\mu }} \biggr)
e^{x/\bar{\mu }} \int_{x}^{\infty }\! \psi (u)
e^{-u/\bar{
\mu }}\, \mathrm{d}u -\frac{\bar{\lambda }}{\bar{\mu }} \biggl(1+
\frac{
\mu }{\bar{\mu }} \biggr)\psi (x), \quad  x\in [b_{j-1},b_{j}].
%\end{split} %
\end{align}

Finally, multiplying \eqref{eq:22} by $(-\bar{\mu })$ and adding
\eqref{eq:21} we obtain \eqref{eq:19}.
\end{proof}

For $1\le j\le k$, we now define the following constants, which are used
in the assertion of Theorem \ref{thm:3} below:
\begin{equation*}
\mathrm{D}_{j}= \bigl( d_{j} (\mu +\bar{\mu }) +\mu
\bar{\mu }(\lambda -\bar{\lambda }) \bigr)^{2} +4\lambda \bar{\lambda
}\mu ^{2} \bar{ \mu }^{2},
\end{equation*}
\begin{equation*}
z_{1,j}=\frac{-  ( d_{j}(\bar{\mu }-\mu ) +\mu \bar{\mu }(\lambda
+\bar{\lambda })   ) +\sqrt{\mathrm{D}_{j}}}{2d_{j}\mu \bar{
\mu }}
\end{equation*}
and
\begin{equation*}
z_{2,j}=\frac{-  ( d_{j}(\bar{\mu }-\mu ) +\mu \bar{\mu }(\lambda
+\bar{\lambda })   ) -\sqrt{\mathrm{D}_{j}}}{2d_{j}\mu \bar{
\mu }}.
\end{equation*}

%t3 #&#
\begin{thm}
\label{thm:3}
Let the surplus process\index{surplus process} $  (X_{t}^{\mathbf{b}}(x)  )_{t\ge 0}$
%follow
be defined by \eqref{eq:2} under the above assumptions, and let claim and
premium sizes\vadjust{\goodbreak} be exponentially distributed with means $\mu $ and
$\bar{\mu }$, respectively. If the net profit condition\index{net profit condition} \eqref{eq:3}
holds, then we have
\begingroup
\abovedisplayskip=8pt
\belowdisplayskip=8pt
%e23 #&#
\begin{equation}
\label{eq:23} \psi _{j}(x)=C_{1,j}
e^{z_{1,j} x} +C_{2,j} e^{z_{2,j} x} +C_{3,j}
\end{equation}
for all $x\in [b_{j-1},b_{j}]$ and $1\le j\le k$, where $C_{3,k}=0$ and
all the other constants $C_{1,j}$, $C_{2,j}$ and $C_{3,j}$ are
determined from the system of linear equations
\eqref{eq:24}--\eqref{eq:27}:
%
%e24 #&#
\begin{align}
\label{eq:24} %
%\begin{split}
&\lambda e^{-b_{j-1}/\mu } \sum
_{l=1}^{j-1} \Biggl( \sum
_{i=1}^{2} \frac{C
_{i,l}}{\mu z_{i,l}+1}\,
\bigl(e^{(z_{i,l}+1/\mu )b_{l}} -e^{(z_{i,l}+1/
\mu )b_{l-1}} \bigr)\nonumber
\\[-2pt]
&\; + \bigl(e^{b_{l}/\mu } \!-\! e^{b_{l-1}/\mu } \bigr)
C_{3,l} \Biggr) \!+\!\sum_{i=1}^{2}
\biggl( \frac{\bar{\lambda }e^{b_{j-1}/\bar{
\mu }}}{\bar{\mu }z_{i,j}-1}\, \bigl(e^{(z_{i,j}-1/\bar{\mu })b_{j}} \!-\! e^{(z_{i,j}-1/\bar{\mu })b_{j-1}}
\bigr)\nonumber
\\[-2pt]
&\; -(d_{j} z_{i,j} +\lambda +\bar{\lambda })
e^{z_{i,j}b_{j-1}} \biggr) C_{i,j} - \bigl(\bar{\lambda
}e^{(b_{j-1}-b_{j})/\bar{\mu }} + \lambda \bigr) C_{3,j}\nonumber
\\[-2pt]
&\; +\bar{\lambda }e^{b_{j-1}/\bar{\mu }} \sum_{l=j+1}^{k}
\Biggl( \sum_{i=1}^{2}
\frac{C_{i,l}}{\bar{\mu }z_{i,l}-1}\, \bigl(e^{(z_{i,l}-1/\bar{
\mu })b_{l}} -e^{(z_{i,l}-1/\bar{\mu })b_{l-1}} \bigr)\nonumber
\\[-2pt]
&\; - \bigl(e^{-b_{l}/\bar{\mu }} -e^{-b_{l-1}/\bar{\mu }} \bigr) C _{3,l}
\Biggr) =-\lambda e^{-b_{j-1}/\mu }, \quad 1\le j\le k,
%\end{split} %
\end{align}
%
%e25 #&#
\begin{equation}
\label{eq:25} C_{1,1} +C_{2,1} +C_{3,1}=1,
\end{equation}
%
%e26 #&#
\begin{equation}
\label{eq:26} d_{j} \sum_{i=1}^{2}
z_{i,j} e^{z_{i,j} b_{j}} C_{i,j} -d_{j+1}
\sum_{i=1}^{2} z_{i,j+1}
e^{z_{i,j+1} b_{j}} C_{i,j+1}=0, \quad  1\le j\le k-1,
\end{equation}
and
%
%e27 #&#
\begin{equation}
\label{eq:27} \sum_{i=1}^{2}
e^{z_{i,j} b_{j}} C_{i,j} +C_{3,j} -\sum
_{i=1}^{2} e ^{z_{i,j+1} b_{j}}
C_{i,j+1} -C_{3,j+1}=0, \quad 1\le j\le k-1,
\end{equation}
\endgroup
provided that its determinant is not equal to 0.
\end{thm}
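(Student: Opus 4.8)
The plan is to solve the ODE supplied by Lemma~\ref{lem:1} layer by layer, and then to pin down the integration constants using the available side conditions, counting carefully so that the number of equations matches the number of unknowns. First, by Lemma~\ref{lem:1}, the restriction of $\psi$ to $[b_{j-1},b_j]$ satisfies the linear third-order ODE \eqref{eq:19} with constant coefficients, whose characteristic polynomial is $d_j\mu\bar\mu z^3+(d_j(\bar\mu-\mu)+\mu\bar\mu(\lambda+\bar\lambda))z^2+(\bar\lambda\bar\mu-\lambda\mu-d_j)z$. I would factor out the root $z=0$ and apply the quadratic formula to the remaining factor; a short algebraic manipulation shows that the discriminant of that quadratic is $\mathrm{D}_j$ and that its two roots are precisely $z_{1,j}$ and $z_{2,j}$. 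Since $\mathrm{D}_j>0$, these roots are real and distinct; the net profit condition \eqref{eq:3} gives $\bar\lambda\bar\mu-\lambda\mu-d_j>0$, so their product is positive, and \eqref{eq:3} also forces $d_j(\bar\mu-\mu)+\mu\bar\mu(\lambda+\bar\lambda)>0$, so their sum is negative — hence both $z_{1,j}$ and $z_{2,j}$ are strictly negative (in particular nonzero). Consequently the general solution of \eqref{eq:19} on the $j$th layer is \eqref{eq:23}.

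Next I would record $2k$ of the required equations from elementary side conditions. Applying $\psi(0)=1$ (Remark~\ref{rem:2}) to $\psi_1$ gives \eqref{eq:25}. The continuity relations $\psi_j(b_j)=\psi_{j+1}(b_j)$, $1\le j\le k-1$ (noted in Section~\ref{sec:2}), give \eqref{eq:27}. Writing \eqref{eq:18} at $x=b_j$ first as the equation valid on $[b_{j-1},b_j]$ and then as the equation valid on $[b_j,b_{j+1}]$ and subtracting: the right-hand side is continuous at $b_j$ (it does not involve $d_j$) and the term $(\lambda+\bar\lambda)\psi$ is continuous, so what remains is $d_j\psi_j'(b_j)=d_{j+1}\psi_{j+1}'(b_j)$, which written out in terms of \eqref{eq:23} is \eqref{eq:26}. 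Finally, $\lim_{x\to\infty}\psi(x)=0$ under \eqref{eq:3} (Remark~\ref{rem:2}), together with $z_{1,k}<0$ and $z_{2,k}<0$, forces the constant term on the last layer to vanish, i.e. $C_{3,k}=0$.

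The remaining $k$ equations \eqref{eq:24} come from the integro-differential equation \eqref{eq:18} itself, which carries genuinely more information than its twice-differentiated form \eqref{eq:19} used in Lemma~\ref{lem:1}. Since $\psi$ does solve \eqref{eq:18} on each layer, in particular it holds at the left endpoint $x=b_{j-1}$ of the $j$th layer. I would substitute \eqref{eq:23} there and evaluate the two integrals by splitting them over the layers,
\begin{gather*}
\int_0^{b_{j-1}}\psi(u)e^{u/\mu}\,\mathrm{d}u=\sum_{l=1}^{j-1}\int_{b_{l-1}}^{b_l}\psi_l(u)e^{u/\mu}\,\mathrm{d}u,\\
\int_{b_{j-1}}^{\infty}\psi(u)e^{-u/\bar\mu}\,\mathrm{d}u=\sum_{l=j}^{k}\int_{b_{l-1}}^{b_l}\psi_l(u)e^{-u/\bar\mu}\,\mathrm{d}u
\end{gather*}
(with $b_k=\infty$, the term $l=j$ being the one on the current layer), integrating each exponential $e^{z_{i,l}u}$ and each constant $C_{3,l}$ over $[b_{l-1},b_l]$ and collecting the coefficient of every $C_{i,l}$; the $l=k$ integral converges because $z_{i,k}-1/\bar\mu<0$, and its constant part drops out since $C_{3,k}=0$. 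Carrying out this computation produces exactly \eqref{eq:24}. This bookkeeping — keeping the factors $e^{b_{j-1}/\mu}$, $e^{b_{j-1}/\bar\mu}$, $(\mu z_{i,l}+1)^{-1}$ and $(\bar\mu z_{i,l}-1)^{-1}$ straight across all the split integrals — is the only genuinely laborious part of the argument and the main obstacle to a clean write-up.

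To conclude, I would observe that \eqref{eq:24}--\eqref{eq:27} together form a system of $3k-1$ linear equations for the $3k-1$ remaining unknowns $C_{1,j},C_{2,j}$ ($1\le j\le k$) and $C_{3,j}$ ($1\le j\le k-1$). The coefficients of the true ruin probability $\psi$ — which by Lemma~\ref{lem:1} necessarily has the form \eqref{eq:23} on every layer — have just been shown to satisfy this system. Hence, provided the determinant of the system is nonzero, these coefficients are its unique solution, and \eqref{eq:23} with the constants so determined gives $\psi_j(x)$ on $[b_{j-1},b_j]$ for each $j$, which is the assertion of the theorem.
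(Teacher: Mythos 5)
Your proposal is correct and follows essentially the same route as the paper: Lemma~\ref{lem:1} plus the factorization of the characteristic polynomial (with the sign analysis of $z_{1,j},z_{2,j}$ via the net profit condition), the $k$ equations \eqref{eq:24} obtained by evaluating \eqref{eq:18} at $x=b_{j-1}$ and splitting the integrals over layers, the conditions $\psi(0)=1$, continuity, derivative matching deduced from \eqref{eq:18}, and $\lim_{x\to\infty}\psi(x)=0$, followed by the counting argument for the $3k-1$ unknowns. Your closing remark — that the true ruin probability's coefficients satisfy the system and are therefore its unique solution when the determinant is nonzero — is a slightly cleaner phrasing of the paper's own uniqueness argument.
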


\begin{proof}
Taking into account the notation introduced in Section \ref{sec:2} and
applying\break  Lemma~\ref{lem:1} we conclude that the function $\psi _{j}(x)$
is a solution to \eqref{eq:19} on $x\in [b_{j-1},b_{j}]$ for each
$1\le j\le k$. The corresponding characteristic equation has the form
%
%e28 #&#
\begin{align}
\label{eq:28} %
%\begin{split}
&d_{j} \mu \bar{\mu
}z^{3} + \bigl( d_{j}(\bar{\mu }-\mu ) +\mu \bar{ \mu
}(\lambda +\bar{\lambda }) \bigr) z^{2} +(\bar{\lambda }\bar{ \mu }-
\lambda \mu -d_{j}) z =0
%\end{split} %
\end{align}
for all $1\le j\le k$. We first show that the equation
%
%e29 #&#
\begin{align}
\label{eq:29} %
%\begin{split}
&d_{j} \mu \bar{\mu
}z^{2} + \bigl( d_{j}(\bar{\mu }-\mu ) +\mu \bar{ \mu
}(\lambda +\bar{\lambda }) \bigr) z +(\bar{\lambda }\bar{\mu }- \lambda \mu
-d_{j})=\xch{0}{0.}
%\end{split} %
\end{align}
has two negative roots. Indeed, its discriminant is equal to
\begin{align*}
%
%\begin{split}
& \bigl( d_{j}(\bar{\mu }-\mu ) +\mu
\bar{\mu }(\lambda +\bar{\lambda }) \bigr)^{2} -4d_{j}
\mu \bar{\mu }(\bar{\lambda }\bar{\mu }-\lambda \mu -d_{j})
\\
& \:\: =d_{j}^{2} (\bar{\mu }-\mu )^{2}
+\mu ^{2} \bar{\mu }^{2} (\lambda +\bar{ \lambda
})^{2} +2d_{j} \mu \bar{\mu }(\lambda +\bar{\lambda })
(\bar{ \mu }-\mu )\\
&\:\: \quad  +4d_{j} \mu \bar{\mu }(d_{j} +
\lambda \mu -\bar{\lambda }\bar{ \mu })
\\
& \:\: =d_{j}^{2} (\mu +\bar{\mu })^{2}
+\mu ^{2} \bar{\mu }^{2} (\lambda +\bar{ \lambda
})^{2} +2d_{j} \mu \bar{\mu }(\lambda -\bar{\lambda })
( \mu +\bar{\mu })
\\
& \:\: = \bigl( d_{j} (\mu +\bar{\mu }) +\mu \bar{\mu }(\lambda -
\bar{\lambda }) \bigr)^{2} +4\lambda \bar{\lambda }\mu
^{2} \bar{\mu }^{2}.
%\end{split} %
\end{align*}
Hence, it is positive and coincides with the constant $\mathrm{D}_{j}$
defined above. Consequently, $z_{1,j}$ and $z_{2,j}$ defined before the
assertion of the theorem are two real roots of equation \eqref{eq:29}.
By the net profit condition\index{net profit condition} \eqref{eq:3}, we have
\begin{equation*}
\bar{\lambda }\bar{\mu }-\lambda \mu -d_{j} >0
\end{equation*}
and
\begin{equation*}
d_{j}(\bar{\mu }-\mu ) +\mu \bar{\mu }(\lambda +\bar{\lambda }) =
\mu (\bar{\lambda }\bar{\mu }-\lambda \mu -d_{j}) +\lambda \mu
^{2} + \lambda \mu \bar{\mu }+d_{j}\bar{\mu }>0
\end{equation*}
for all $1\le j\le k$, which implies that $z_{1,j}<0$ and $z_{2,j}<0$.

Therefore, $z_{1,j}<0$, $z_{2,j}<0$ and $z_{3,j}=0$ are roots of
equation \eqref{eq:28}, and we get \eqref{eq:23} with some constants
$C_{1,j}$, $C_{2,j}$ and $C_{3,j}$. Moreover, since condition
\eqref{eq:3} holds, using standard considerations (see, e.g.,
\cite{MiRa2016,MiRaSt2015,RoScScTe1999}) we can easily show that
$\lim_{x\to \infty } \psi (x) =0$, which yields $C_{3,k}=0$.

To determine all the other constants $C_{1,j}$, $C_{2,j}$ and
$C_{3,j}$, we use the following boundary conditions. The first $k$
conditions are obtained by letting $x=b_{j-1}$ in \eqref{eq:18} for
$1\le j\le k$:
%
%e30 #&#
\begin{align}
\label{eq:30} %
%\begin{split}
&d_{j} \psi
'(b_{j-1})+(\lambda +\bar{\lambda })\psi
(b_{j-1}) =\frac{
\lambda }{\mu }\, e^{-b_{j-1}/\mu } \int
_{0}^{b_{j-1}} \psi (u)e^{u/
\mu }\,
\mathrm{d}u\nonumber
\\[-2pt]
&\quad +\lambda e^{-b_{j-1}/\mu } +\frac{\bar{\lambda }}{\bar{\mu }} \, e^{b_{j-1}/\bar{\mu }}
\int_{b_{j-1}}^{\infty } \psi (u)e^{-u/\bar{
\mu }}\,
\mathrm{d}u.
%\end{split} %
\end{align}

One more condition is obtained from the equality $\psi (0)=1$. Finally,
the last $2(k-1)$ conditions are derived from the equalities
$d_{j} \psi '_{j}(b_{j})=d_{j+1} \psi '_{j+1}(b_{j})$ and $\psi _{j}(b
_{j})=\psi _{j+1}(b_{j})$ for $1\le j\le k-1$. Note that the first
equality easily follows from \eqref{eq:18}.

Taking into account \eqref{eq:23}, for all $1\le j\le k$, we get:
%
%e31 #&#
\begin{equation}
\label{eq:31} \psi '_{j}(x)=C_{1,j}
z_{1,j} e^{z_{1,j} x} +C_{2,j} z_{2,j}
e^{z_{2,j}
x}, \quad x\in [b_{j-1},b_{j}],\vspace*{-9pt}
\end{equation}
%
%e32 #&#
\begin{align}
\label{eq:32} %
%\begin{split}
&\int_{0}^{b_{j-1}}
\psi (u)e^{u/\mu }\, \mathrm{d}u =\sum_{l=1}^{j-1}
\int_{b_{l-1}}^{b_{l}} \psi _{l}(u)e^{u/\mu }
\, \mathrm{d}u =\sum_{l=1} ^{j-1} \Biggl(
\sum_{i=1}^{2} \frac{C_{i,l}}{z_{i,l}+1/\mu }\nonumber
\\[-2pt]
&\quad \times \bigl( e^{(z_{i,l}+1/\mu )b_{l}} -e^{(z_{i,l}+1/\mu )b
_{l-1}} \bigr)
+C_{3,l} \mu \bigl( e^{b_{l}/\mu } -e^{b_{l-1}/\mu } \bigr)
\Biggr)
%\end{split} %
\end{align}
and
%
%e33 #&#
\begin{align}
\label{eq:33} %
%\begin{split}
&\int_{b_{j-1}}^{\infty }
\psi (u)e^{-u/\bar{\mu }}\, \mathrm{d}u = \sum_{l=j}^{k}
\int_{b_{l-1}}^{b_{l}} \psi _{l}(u)e^{-u/\bar{\mu }}
\, \mathrm{d}u =\sum_{l=j}^{k} \Biggl(
\sum_{i=1}^{2} \frac{C_{i,l}}{z
_{i,l}-1/\bar{\mu }}\nonumber
\\[-2pt]
&\quad \times \bigl( e^{(z_{i,l}-1/\bar{\mu })b_{l}} -e^{(z_{i,l}-1/\bar{
\mu })b_{l-1}} \bigr)
-C_{3,l} \bar{\mu } \bigl( e^{-b_{l}/\bar{\mu }} -e^{-b_{l-1}/\bar{\mu }}
\bigr) \Biggr).
%\end{split} %
\end{align}\newpage

Substituting \eqref{eq:23}, \eqref{eq:31}, \eqref{eq:32} and
\eqref{eq:33} into \eqref{eq:30} and doing some simplifications yield
\eqref{eq:24}. Next, from the equality $\psi (0)=1$ we get
\eqref{eq:25}. Lastly, substituting \eqref{eq:31} into $d_{j} \psi '_{j}(b
_{j})=d_{j+1} \psi '_{j+1}(b_{j})$ and \eqref{eq:23} into $\psi _{j}(b
_{j})=\psi _{j+1}(b_{j})$ for $1\le j\le k-1$ immediately yields
\eqref{eq:26} and \eqref{eq:27}, respectively.\vadjust{\goodbreak}

Thus, we get the system of $3k-1$ linear equations
\eqref{eq:24}--\eqref{eq:27} to determine $3k-1$ unknown constants. That
system has a unique solution provided that its determinant is not equal
to 0. Hence, piecewise differential equation \eqref{eq:19} has a unique
solution satisfying certain conditions and that solution is given by
\eqref{eq:23}. Since we have derived \eqref{eq:19} from~\eqref{eq:18}
without any additional assumptions concerning the differentiability of
$\psi (x)$, we conclude that the functions $\psi _{j}(x)$, $1\le j
\le k$, given by~\eqref{eq:23} are unique solutions to~\eqref{eq:18} on
the intervals $[b_{j-1},b_{j}]$ satisfying certain conditions. This
guaranties that the functions $\psi _{j}(x)$ we have found coincide with
the ruin probability\index{ruin probability} on $[b_{j-1},b_{j}]$, which completes the proof.
\end{proof}

%r6 #&#
\begin{remark}
\label{rem:6}
In particular, if $k=2$, then $C_{3,2}=0$ and the constants
$C_{1,1}$, $C_{2,1}$, $C_{3,1}$, $C_{1,2}$ and $C_{2,2}$ are determined
from the system of linear equations \eqref{eq:34}--\eqref{eq:38}:
%
%e34 #&#
\begin{align}
\label{eq:34} %
%\begin{split}
&\sum_{i=1}^{2}
\biggl( \frac{\bar{\lambda }}{\bar{\mu }z_{i,1}-1}\, \bigl(1- e^{(z_{i,1}-1/\bar{\mu })b_{1}} \bigr)
+d_{1} z_{i,1} +\lambda +\bar{\lambda } \biggr)
C_{i,1}\nonumber
\\
&\quad + \bigl( \bar{\lambda }e^{-b_{1}/\bar{\mu }} +\lambda \bigr) C
_{3,1} +\sum_{i=1}^{2}
\frac{\bar{\lambda }e^{(z_{i,2}-1/\bar{\mu })b
_{1}}}{\bar{\mu }z_{i,2}-1}\, C_{i,2} =\lambda ,\\[-30pt]\nonumber
%\end{split} %
\end{align}
%
%e35 #&#
\begin{align}
\label{eq:35} %
%\begin{split}
&\sum_{i=1}^{2}
\frac{\lambda e^{-b_{1}/\mu }}{\mu z_{i,1}+1}\, \bigl(1- e^{(z_{i,1}+1/\mu )b_{1}} \bigr) C_{i,1} +
\lambda \bigl( e ^{-b_{1}/\mu }-1 \bigr) C_{3,1}\nonumber
\\
&\quad +\sum_{i=1}^{2} \biggl(
\frac{\bar{\lambda }e^{z_{i,2}b_{1}}}{\bar{
\mu }z_{i,2}-1} +(d_{2} z_{i,2} +\lambda +\bar{
\lambda }) e^{z_{i,2}b
_{1}} \biggr) C_{i,2} =\lambda
e^{-b_{1}/\mu },
%\end{split} %
\end{align}
%
%e36 #&#
\begin{equation}
\label{eq:36} C_{1,1} +C_{2,1} +C_{3,1}=1,
\end{equation}
%
%e37 #&#
\begin{equation}
\label{eq:37} d_{1} \sum_{i=1}^{2}
z_{i,1} e^{z_{i,1} b_{1}} C_{i,1} -d_{2}
\sum_{i=1}^{2} z_{i,2}
e^{z_{i,2} b_{1}} C_{i,2}=0
\end{equation}
and
%
%e38 #&#
\begin{equation}
\label{eq:38} \sum_{i=1}^{2}
e^{z_{i,1} b_{1}} C_{i,1} +C_{3,1} -\sum
_{i=1}^{2} e ^{z_{i,2} b_{1}}
C_{i,2}=0
\end{equation}
provided that its determinant is not equal to 0.
\end{remark}

The proposition below enables us to check whether the system of
equations \eqref{eq:34}--\eqref{eq:38} has a unique solution. Let
\begin{align*}
%
%\begin{split}
\Delta &=\frac{1}{\bar{\lambda }(\mu +\bar{\mu })^{2}}
\\
&\times \bigl( d_{1} \bar{\mu }(z_{1,1}
-z_{2,1}) \bigl( e^{b_{1}/\bar{
\mu }} -e^{-b_{1}/\mu } \bigr) \bigl(
(\bar{\lambda }\bar{\mu }- \lambda \mu -d_{1}) e^{b_{1}/\bar{\mu }}
-(\bar{\lambda }\bar{\mu }- \lambda \mu -d_{2}) \bigr)
\\
& \;\;\quad -d_{1} \bar{\mu }e^{b_{1}/\bar{\mu }}
(z_{1,1} -z_{2,1}) (\bar{\lambda }\bar{\mu }-\lambda \mu
-d_{1}) \bigl( e^{b_{1}/\bar{\mu }} -e^{-b
_{1}/\mu } \bigr)
\\
& \;\;\quad +d_{1}\mu (1-1/\bar{\mu }) \bigl( e^{z_{1,1} b_{1}}
-e^{z_{2,1} b_{1}} \bigr) \bigl( (\bar{\lambda }\bar{\mu }-\lambda \mu
-d_{1}) e^{b_{1}/\bar{
\mu }} -(\bar{\lambda }\bar{\mu }-\lambda \mu
-d_{2}) \bigr)
\\
& \;\;\quad +(d_{2}-d_{1}) \bigl( e^{z_{1,1} b_{1}}
-e^{z_{2,1} b_{1}} \bigr) (\bar{ \lambda }\bar{\mu }-\lambda \mu
-d_{1}) \bigl( e^{b_{1}/\bar{\mu }} -e ^{-b_{1}/\mu } \bigr)
\\
& \;\;\quad +d_{1}^{2}\mu e^{b_{1}/\bar{\mu }} (\bar{
\mu }-1) \bigl( z_{2,1} e ^{z_{1,1} b_{1}} -z_{1,1}
e^{z_{2,1} b_{1}} \bigr)
\\
& \;\;\quad +d_{1}\bar{\mu }(d_{2}-d_{1})
\bigl( e^{b_{1}/\bar{\mu }} -e^{-b_{1}/
\mu } \bigr) \bigl( z_{2,1}
e^{z_{1,1} b_{1}} -z_{1,1} e^{z_{2,1} b
_{1}} \bigr) \bigr).
%\end{split}
%
\end{align*}

%p1 #&#
\begin{propos}
\label{pr:1}
The system of linear equations \eqref{eq:34}--\eqref{eq:38} has a unique
solution if and only if $\Delta \neq 0$.
\end{propos}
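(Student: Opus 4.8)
The plan is to read \eqref{eq:34}--\eqref{eq:38} as a system $A\mathbf{C}=\mathbf{g}$ of five linear equations in the five unknowns $C_{1,1}$, $C_{2,1}$, $C_{3,1}$, $C_{1,2}$, $C_{2,2}$, where $A$ is the $5\times5$ coefficient matrix read off from those equations. By elementary linear algebra such a square system has a unique solution if and only if $\det A\neq0$. Hence the proposition will follow once we prove that $\det A$ is a nonzero constant multiple of $\Delta$ (indeed $\Delta$ is just a normalisation of $\det A$), and the entire argument reduces to a determinant evaluation.

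To evaluate $\det A$ I would first use equation \eqref{eq:36}, whose coefficient row is $(1,1,1,0,0)$, to eliminate $C_{3,1}$: subtracting appropriate multiples of this row from the rows originating from \eqref{eq:34}, \eqref{eq:35} and \eqref{eq:38} turns the $C_{3,1}$-column into $(0,0,1,0,0)^{\top}$, and expansion along it reduces $\det A$ to a $4\times4$ determinant in the columns indexed by $C_{1,1}$, $C_{2,1}$, $C_{1,2}$, $C_{2,2}$. The entries of this matrix still carry the factors $\bar{\lambda}/(\bar{\mu}z_{i,j}-1)$ and $\lambda/(\mu z_{i,j}+1)$; these I would clear with the help of the characteristic quadratic \eqref{eq:29}. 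Since $z_{1,j},z_{2,j}$ are its two roots (as established in the proof of Theorem~\ref{thm:3}), the identity
\[
d_{j}\mu\bar{\mu}z^{2}+\bigl(d_{j}(\bar{\mu}-\mu)+\mu\bar{\mu}(\lambda+\bar{\lambda})\bigr)z+(\bar{\lambda}\bar{\mu}-\lambda\mu-d_{j})=(\bar{\mu}z-1)\bigl(d_{j}\mu z+d_{j}+\mu(\lambda+\bar{\lambda})\bigr)+\bar{\lambda}(\mu+\bar{\mu})
\]
and the analogous factorisation with $(\mu z+1)$ give, upon putting $z=z_{i,j}$,
\[
\frac{\bar{\lambda}}{\bar{\mu}z_{i,j}-1}=-\frac{d_{j}\mu z_{i,j}+d_{j}+\mu(\lambda+\bar{\lambda})}{\mu+\bar{\mu}},\qquad \frac{\lambda}{\mu z_{i,j}+1}=\frac{d_{j}\bar{\mu}z_{i,j}-d_{j}+\bar{\mu}(\lambda+\bar{\lambda})}{\mu+\bar{\mu}}
\]
(which also shows that the denominators never vanish). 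After these substitutions each entry becomes a linear combination of $e^{z_{i,j}b_{1}}$, $e^{\pm b_{1}/\mu}$, $e^{\pm b_{1}/\bar{\mu}}$ with coefficients polynomial in $z_{i,j}$, $d_{j}$, $\lambda$, $\bar{\lambda}$, $\mu$, $\bar{\mu}$ divided by $\mu+\bar{\mu}$, and the two factors $1/(\mu+\bar{\mu})$ pulled out of the rows coming from \eqref{eq:34} and \eqref{eq:35} account for the $(\mu+\bar{\mu})^{2}$ appearing in $\Delta$.

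It then remains to expand the $4\times4$ determinant and collect terms by the monomials $e^{z_{i,1}b_{1}}$, $e^{b_{1}/\bar{\mu}}$, $e^{-b_{1}/\mu}$ and their products. Here the symmetric-function relations $z_{1,j}+z_{2,j}=-\bigl(d_{j}(\bar{\mu}-\mu)+\mu\bar{\mu}(\lambda+\bar{\lambda})\bigr)/(d_{j}\mu\bar{\mu})$ and $z_{1,j}z_{2,j}=(\bar{\lambda}\bar{\mu}-\lambda\mu-d_{j})/(d_{j}\mu\bar{\mu})$ collapse the $j=2$ contributions, and $z_{1,1}-z_{2,1}=\sqrt{\mathrm{D}_{1}}/(d_{1}\mu\bar{\mu})$ factors out and is nonzero because $\mathrm{D}_{1}>0$; matching the result term by term against the six-line expression displayed before the statement identifies $\det A$ with $\Delta$ up to a nonzero multiplicative constant. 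Thus $\det A\neq0$ precisely when $\Delta\neq0$, which is the assertion. The only real obstacle is this last bookkeeping — tracking signs, the many exponential factors, and the exact places where the quadratic relations must be invoked; the reduction of the square system to the determinant condition is entirely routine.
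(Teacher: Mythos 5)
Your overall framing is sound: the system is square, so uniqueness is equivalent to the non-vanishing of its determinant, and your two identities
\begin{equation*}
\frac{\bar{\lambda }}{\bar{\mu }z_{i,j}-1}=-\frac{d_{j}\mu z_{i,j}+d_{j}+\mu (\lambda +\bar{\lambda })}{\mu +\bar{\mu }},
\qquad
\frac{\lambda }{\mu z_{i,j}+1}=\frac{d_{j}\bar{\mu }z_{i,j}-d_{j}+\bar{\mu }(\lambda +\bar{\lambda })}{\mu +\bar{\mu }}
\end{equation*}
are correct consequences of the characteristic quadratic \eqref{eq:29} and do the same job as the Vieta identities used in the paper. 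The elimination of $C_{3,1}$ via \eqref{eq:36} is also exactly the paper's first step (it produces \eqref{eq:39}--\eqref{eq:41}). Where you diverge is what happens next: the paper does \emph{not} expand a $4\times 4$ determinant. It uses the pair \eqref{eq:37}, \eqref{eq:41} to solve explicitly for $e^{z_{1,2}b_{1}}C_{1,2}$ and $e^{z_{2,2}b_{1}}C_{2,2}$ in terms of $C_{1,1},C_{2,1}$ (equations \eqref{eq:42}--\eqref{eq:43}, an invertible step since $z_{1,2}\neq z_{2,2}$ because $\mathrm{D}_{2}>0$), back-substitutes into the remaining two equations, and lands on a $2\times 2$ system \eqref{eq:45}, \eqref{eq:47} in $C_{1,1},C_{2,1}$ whose determinant \emph{is} $\Delta $ --- indeed $\Delta $ is essentially defined as that $2\times 2$ determinant.

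The genuine gap in your proposal is that the decisive step --- showing that your $4\times 4$ determinant equals a nonzero multiple of $\Delta $ --- is asserted rather than carried out, and that is where all the content of the proposition lives: $\Delta $ is a specific six-term expression, and nothing short of an actual computation identifies it. Two of the supporting claims you lean on in sketching that computation are also inaccurate as stated: the factor $1/(\mu +\bar{\mu })$ cannot simply be "pulled out" of the rows coming from \eqref{eq:34} and \eqref{eq:35}, because only the terms $\bar{\lambda }/(\bar{\mu }z_{i,j}-1)$ and $\lambda /(\mu z_{i,j}+1)$ acquire that denominator while the remaining summands $d_{j}z_{i,j}+\lambda +\bar{\lambda }$, etc., do not; and $z_{1,1}-z_{2,1}$ is not an overall factor of $\Delta $ (it multiplies only two of its six terms, the others carrying $e^{z_{1,1}b_{1}}-e^{z_{2,1}b_{1}}$ or $z_{2,1}e^{z_{1,1}b_{1}}-z_{1,1}e^{z_{2,1}b_{1}}$ instead), so it cannot be extracted and discarded as a nonzero prefactor. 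To close the gap along the paper's lines, perform the explicit elimination of $C_{1,2},C_{2,2}$ first; the resulting $2\times 2$ determinant, simplified with Vieta's relations for $j=1$ and $j=2$, is then checked directly against the displayed expression for $\Delta $.
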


\begin{proof}
From \eqref{eq:36} we have $C_{3,1} =1 -C_{1,1} -C_{2,1}$. Substituting
that into \eqref{eq:34}, \eqref{eq:35} and \eqref{eq:38} yields
%
%e39 #&#
\begin{align}
\label{eq:39} %
%\begin{split}
&\sum_{i=1}^{2}
\biggl( \frac{\bar{\lambda }}{\bar{\mu }z_{i,1}-1}\, \bigl(1- e^{(z_{i,1}-1/\bar{\mu })b_{1}} \bigr)
+d_{1} z_{i,1} +\bar{ \lambda }-\bar{\lambda
}e^{-b_{1}/\bar{\mu }} \biggr) C_{i,1}\nonumber
\\[-2pt]
&\quad +\sum_{i=1}^{2}
\frac{\bar{\lambda }e^{(z_{i,2}-1/\bar{\mu })b
_{1}}}{\bar{\mu }z_{i,2}-1}\, C_{i,2} =-\bar{\lambda }e^{-b_{1}/\bar{
\mu }},\\[-30pt]\nonumber
%\end{split} %
\end{align}
%
%e40 #&#
\begin{align}
\label{eq:40} %
%\begin{split}
&\sum_{i=1}^{2}
\biggl( \frac{\lambda e^{-b_{1}/\mu }}{\mu z_{i,1}+1} \, \bigl(1- e^{(z_{i,1}+1/\mu )b_{1}} \bigr) +\lambda
\bigl( 1-e^{-b
_{1}/\mu } \bigr) \biggr) C_{i,1}\nonumber
\\[-2pt]
&\quad +\sum_{i=1}^{2} \biggl(
\frac{\bar{\lambda }e^{z_{i,2}b_{1}}}{\bar{
\mu }z_{i,2}-1} +(d_{2} z_{i,2} +\lambda +\bar{
\lambda }) e^{z_{i,2}b
_{1}} \biggr) C_{i,2} =\lambda
%\end{split}
%
\end{align}
and
%
%e41 #&#
\begin{equation}
\label{eq:41} \sum_{i=1}^{2}
\bigl(e^{z_{i,1}b_{1}}-1 \bigr) C_{i,1} -\sum
_{i=1}^{2} e^{z_{i,2}b_{1}}
C_{i,2}=-1.
\end{equation}

Thus, the system of equations \eqref{eq:34}--\eqref{eq:38} has a unique
solution if and only if the system of equations \eqref{eq:39},
\eqref{eq:40}, \eqref{eq:37} and \eqref{eq:41} does.

Multiplying \eqref{eq:41} by $(-d_{2} z_{1,2})$, adding \eqref{eq:37}
and rearranging the terms we obtain
%
%e42 #&#
\begin{equation}
\label{eq:42} e^{z_{2,2} b_{1}} C_{2,2} =\frac{
d_{2} z_{1,2}
-\sum_{i=1}^{2}   ( d_{1} z_{i,1} e^{z_{i,1}b_{1}}
+d
_{2} z_{1,2}   (1-e^{z_{i,1}b_{1}}  )   ) C_{i,1}}{d_{2} (z
_{1,2} -z_{2,2})}.
\end{equation}

Similarly, multiplying \eqref{eq:41} by $(-d_{2} z_{2,2})$, adding
\eqref{eq:37} and rearranging the terms we obtain
%
%e43 #&#
\begin{equation}
\label{eq:43} e^{z_{1,2} b_{1}} C_{1,2} =\frac{
d_{2} z_{2,2}
-\sum_{i=1}^{2}   ( d_{1} z_{i,1} e^{z_{i,1}b_{1}}
+d
_{2} z_{2,2}   (1-e^{z_{i,1}b_{1}}  )   ) C_{i,1}}{d_{2} (z
_{2,2} -z_{1,2})}.
\end{equation}

Substituting \eqref{eq:42} and \eqref{eq:43} into \eqref{eq:39} and
doing some simplifications yield
%
%e44 #&#
\begin{align}
\label{eq:44} %
%\begin{split}
&\sum_{i=1}^{2}
\biggl( \frac{\bar{\mu }z_{i,1} e^{b_{1}/\bar{\mu }} -e
^{z_{i,1}b_{1}}}{\bar{\mu }z_{i,1}-1} +d_{1} z_{i,1}
e^{b_{1}/\bar{
\mu }} -1\nonumber
\\[-2pt]
& \qquad +\frac{-d_{1} \bar{\mu }z_{i,1} e^{z_{i,1}b_{1}}
+d_{2} (1 -\bar{
\mu }z_{1,2} -\bar{\mu }z_{2,2}) (1-e^{z_{i,1}b_{1}})}{d_{2}(\bar{
\mu }z_{1,2}-1)(\bar{\mu }z_{2,2}-1)} \biggr) C_{i,1}\nonumber
\\[-2pt]
&\quad =-\frac{\bar{\mu }^{2} z_{1,2} z_{2,2}}{(\bar{\mu }z_{1,2}-1)(\bar{
\mu }z_{2,2}-1)}.
%\end{split} %
\end{align}

By Vieta's theorem applied to \eqref{eq:29} for $j=2$, we have
\begin{equation*}
z_{1,2} z_{2,2} =\frac{\bar{\lambda }\bar{\mu }-\lambda \mu -d_{2}}{d
_{2} \mu \bar{\mu }},\vadjust{\goodbreak}
\end{equation*}
\begin{equation*}
1 -\bar{\mu }z_{1,2} -\bar{\mu }z_{2,2} =
\frac{d_{2} \bar{\mu }+
\mu \bar{\mu }(\lambda +\bar{\lambda })}{d_{2} \mu }
\end{equation*}
and
\begin{equation*}
(\bar{\mu }z_{1,2}-1) (\bar{\mu }z_{2,2}-1) =
\frac{\bar{\lambda }\bar{
\mu }(\mu +\bar{\mu })}{d_{2} \mu }.
\end{equation*}

Substituting these equalities into \eqref{eq:44} gives
%
%e45 #&#
\begin{align}
\label{eq:45} %
%\begin{split}
&\sum_{i=1}^{2}
\biggl( \frac{\bar{\mu }z_{i,1} e^{b_{1}/\bar{\mu }} -e
^{z_{i,1} b_{1}}}{\bar{\mu }z_{i,1}-1} +d_{1} z_{i,1}
e^{b_{1}/\bar{
\mu }} -1\nonumber
\\[-2pt]
& \qquad +\frac{-d_{1} \mu z_{i,1} e^{z_{i,1}b_{1}}
+(d_{2} +\mu (\lambda +\bar{
\lambda })) (1-e^{z_{i,1}b_{1}})}{\bar{\lambda }(\mu +\bar{\mu })} \biggr) C_{i,1}\nonumber
\\[-2pt]
&\quad =\frac{d_{2} +\lambda \mu -\bar{\lambda }\bar{\mu }}{\bar{
\lambda }(\mu +\bar{\mu })}.
%\end{split} %
\end{align}

Next, multiplying \eqref{eq:41} by $(\lambda +\bar{\lambda })$ and
adding \eqref{eq:37} and \eqref{eq:40} we get
%
%e46 #&#
\begin{align}
\label{eq:46} %
%\begin{split}
&\sum_{i=1}^{2}
\biggl( \frac{\lambda e^{-b_{1}/\mu }}{\mu z_{i,1}+1} \, \bigl(1- e^{(z_{i,1}+1/\mu )b_{1}} \bigr) +\lambda
\bigl( 1-e^{-b
_{1}/\mu } \bigr) +d_{1} z_{i,1}
e^{z_{i,1}b_{1}}\nonumber
\\[-2pt]
&\quad +(\lambda +\bar{\lambda }) \bigl( e^{z_{i,1}b_{1}}-1 \bigr) \biggr)
C_{i,1} +\sum_{i=1}^{2}
\frac{\bar{\lambda }e^{z_{i,2}b_{1}}}{\bar{
\mu }z_{i,2}-1} C_{i,2} =-\bar{\lambda }.
%\end{split}
%
\end{align}

Multiplying \eqref{eq:39} by $(-e^{b_{1}/\bar{\mu }})$ and adding
\eqref{eq:46} we obtain
%
%e47 #&#
\begin{align}
\label{eq:47} %
%\begin{split}
&\sum_{i=1}^{2}
\biggl( \frac{-\lambda \mu z_{i,1} e^{-b_{1}/\mu } -
\lambda e^{z_{i,1}b_{1}}}{\mu z_{i,1}+1} +\frac{-\bar{\lambda }e^{b
_{1}/\bar{\mu }} +\bar{\lambda }e^{z_{i,1}b_{1}}}{\bar{\mu }z_{i,1}-1}\nonumber
\\[-2pt]
&\quad -(d_{1} z_{i,1} +\bar{\lambda })
e^{b_{1}/\bar{\mu }} +(d_{1} z_{i,1} +\lambda +\bar{
\lambda }) e^{z_{i,1}b_{1}} \biggr) C_{i,1} = \lambda -\bar{\lambda
}.
%\end{split} %
\end{align}

Thus, if the system of equations \eqref{eq:45} and \eqref{eq:47} has a
unique solution, then $C_{1,2}$ and $C_{2,2}$ can be found from
\eqref{eq:43} and \eqref{eq:42}, respectively. Consequently, the system
of equations \eqref{eq:39}, \eqref{eq:40}, \eqref{eq:37} and
\eqref{eq:41} has a unique solution if and only if the system of
equations \eqref{eq:45} and \eqref{eq:47} does.

A standard computation shows that the determinant of the system of
equations \eqref{eq:45} and \eqref{eq:47} is equal to $\Delta $ defined
above. In particular, here we use Vieta's theorem applied to
\eqref{eq:29} for $j=1$. Therefore, the system of equations
\eqref{eq:34}--\eqref{eq:38} has a unique solution if and only if
$\Delta \neq 0$.
\end{proof}\newpage

%s5.2 #&#
\subsection{Explicit formulas for the expected discounted dividend
payments\index{dividend ! payments} until ruin}%
\label{sec:5.2}
By \eqref{eq:17}, equation \eqref{eq:9} for the expected discounted
dividend payments\index{dividend ! payments} can be written as
%
%e48 #&#
\begin{align}
\label{eq:48} %
%\begin{split}
&d_{j}
v'(x)+(\lambda +\bar{\lambda }+\delta )v(x)\nonumber
\\[-2pt]
&\quad =\frac{\lambda }{\mu }\, e^{-x/\mu } \int_{0}^{x}
v(u)e^{u/
\mu }\, \mathrm{d}u +\frac{\bar{\lambda }}{\bar{\mu }}\, e^{x/\bar{
\mu }}
\int_{x}^{\infty } v(u)e^{-u/\bar{\mu }}\,
\mathrm{d}u +d_{j}
%\end{split} %
\end{align}
for all $x\in [b_{j-1},b_{j}]$ and $1\le j\le k$.

The piecewise integro-differential equation \eqref{eq:48} can also be
reduced to a piecewise linear differential equation with constant
coefficients.

%l2 #&#
\begin{lemma}
\label{lem:2}Let the surplus process\index{surplus process} $  (X_{t}^{\mathbf{b}}(x)  )_{t\ge 0}$
%follow
be defined by \eqref{eq:2} under the above assumptions, and let claim and
premium sizes be exponentially distributed with means $\mu $ and
$\bar{\mu }$, respectively. Then $v(x)$ is a solution to the piecewise
differential equation
%
%e49 #&#
\begin{align}
\label{eq:49} %
%\begin{split}
&d_{j}\mu \bar{\mu
}v'''(x) + \bigl(
d_{j}(\bar{\mu }-\mu ) +\mu \bar{ \mu }(\lambda +\bar{\lambda }+
\delta ) \bigr)v''(x)\nonumber
\\[-2pt]
&\quad +\bigl(\bar{\mu }(\bar{\lambda }+\delta ) -\mu (\lambda +\delta ) -d
_{j}\bigr)v'(x) -\delta v(x)=-d_{j}
%\end{split} %
\end{align}
for all $x\in [b_{j-1},b_{j}]$ and $1\le j\le k$.
\end{lemma}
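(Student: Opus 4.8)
The plan is to mimic the proof of Lemma~\ref{lem:1} exactly, the only difference being the extra constant term $+d_j$ on the right-hand side of \eqref{eq:48} and the presence of $\delta$ rather than $\delta_0$ together with the fact that $w\equiv 1$, $\delta_0=0$ is no longer being imposed. First I would observe that the right-hand side of \eqref{eq:48} is differentiable on $[b_{j-1},b_j]$ (the two integral terms have differentiable integrands and smooth limits, and $d_j$ is constant), so $v(x)$ is twice differentiable there; differentiating \eqref{eq:48} once gives an identity of the form
\begin{align*}
&d_j v''(x) + (\lambda+\bar\lambda+\delta) v'(x) = -\frac{1}{\mu}\Biggl(\frac{\lambda}{\mu}\,e^{-x/\mu}\int_0^x v(u)e^{u/\mu}\,\mathrm{d}u\Biggr)\\
&\quad +\frac{\bar\lambda}{\bar\mu^2}\,e^{x/\bar\mu}\int_x^\infty v(u)e^{-u/\bar\mu}\,\mathrm{d}u + \biggl(\frac{\lambda}{\mu}-\frac{\bar\lambda}{\bar\mu}\biggr)v(x),
\end{align*}
where the constant $d_j$ has disappeared upon differentiation. (I would double-check the coefficient of $v(x)$ here; it should come out exactly as in \eqref{eq:20}.)

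Next I would multiply this differentiated identity by $\mu$ and add \eqref{eq:48} itself. The $\lambda$-integral term is eliminated (that is the point of the multiplier $\mu$), leaving an equation whose only remaining integral is the $\bar\lambda$-tail integral, plus the constant $d_j$ carried over from \eqref{eq:48}. Concretely this produces the analogue of \eqref{eq:21}:
\begin{align*}
&d_j\mu v''(x) + \bigl(d_j+\mu(\lambda+\bar\lambda+\delta)\bigr)v'(x) + \bigl(\bar\lambda(1+\mu/\bar\mu)-\delta\bigr)v(x)\\
&\quad = \frac{\bar\lambda}{\bar\mu}\Bigl(1+\frac{\mu}{\bar\mu}\Bigr)e^{x/\bar\mu}\int_x^\infty v(u)e^{-u/\bar\mu}\,\mathrm{d}u + d_j.
\end{align*}
From this it is clear that $v(x)$ has a third derivative on $[b_{j-1},b_j]$, so I would differentiate once more; the constant $d_j$ again drops out, and the resulting equation contains only the same tail integral again.

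Finally I would eliminate the tail integral exactly as in the proof of Lemma~\ref{lem:1}: multiply the once-more-differentiated equation by $(-\bar\mu)$ and add it to the previous displayed equation. The tail-integral terms cancel because their coefficients differ by the factor $1/\bar\mu$, and the $\delta v(x)$ term survives with the right sign, giving \eqref{eq:49}. The only genuinely non-routine part is bookkeeping on the coefficients---making sure that after the two multiply-and-add steps the coefficient of $v'(x)$ collapses to $\bar\mu(\bar\lambda+\delta)-\mu(\lambda+\delta)-d_j$, that the coefficient of $v(x)$ becomes exactly $-\delta$, and that the constant term on the right accumulates to $-d_j$ (it appears once in \eqref{eq:48}, survives the first multiply-and-add as $+d_j$, and then gets multiplied by $-\bar\mu$ only in the discarded higher-derivative equation where it has already vanished by differentiation --- so one must track carefully whether the surviving $d_j$ comes out with coefficient $-d_j$ or $d_j$; I would verify the sign by specializing to $k=1$, constant $d_j=d$, where \eqref{eq:49} must reduce to the known ODE for $v$ in the threshold-free model of \cite{Ra2017}). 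No differentiability of $v$ beyond what is derived along the way is assumed, so the statement follows.
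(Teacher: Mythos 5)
Your overall strategy is exactly the right one and coincides with the paper's, which does not spell the argument out at all (it merely states that the proof is similar to that of Lemma~\ref{lem:1}): differentiate \eqref{eq:48}, multiply by $\mu$ and add to kill the $\lambda$-integral, differentiate again, then multiply by $-\bar{\mu}$ and add to kill the $\bar{\lambda}$-tail integral, the constant $d_{j}$ surviving only through the undifferentiated copies. Your first displayed identity and your bookkeeping of the constant term are correct.

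However, the one intermediate equation you actually write out contains a sign error that, if carried through mechanically, produces the wrong final equation. When you add \eqref{eq:48} to $\mu$ times the differentiated equation, the coefficient of $v(x)$ collected on the left-hand side is $(\lambda +\bar{\lambda }+\delta )-\mu \bigl(\tfrac{\lambda }{\mu }-\tfrac{\bar{\lambda }}{\bar{\mu }}\bigr)=\bar{\lambda }\bigl(1+\tfrac{\mu }{\bar{\mu }}\bigr)+\delta$, not $\bar{\lambda }\bigl(1+\tfrac{\mu }{\bar{\mu }}\bigr)-\delta$ as in your display. This matters: in the final elimination the term $\bar{\lambda }(1+\mu /\bar{\mu })\,v(x)$ coming from the differentiated tail integral cancels exactly the $\bar{\lambda }(1+\mu /\bar{\mu })$ part, so the surviving coefficient of $v(x)$ is whatever is left over. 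With the correct $+\delta$ one obtains $-\delta v(x)$ in \eqref{eq:49} (after the overall sign flip) together with the stated coefficient $\bar{\mu }(\bar{\lambda }+\delta )-\mu (\lambda +\delta )-d_{j}$ of $v'(x)$; with your $-\delta$ one would instead get $+\delta v(x)$ and the coefficient $\bar{\mu }(\bar{\lambda }-\delta )-\mu (\lambda +\delta )-d_{j}$, neither of which matches \eqref{eq:49}. You did flag this coefficient as the point to double-check, and the rest of the argument (differentiability obtained along the way, cancellation of the tail integrals, accumulation of the constant to $-d_{j}$) is sound, so the fix is a one-line correction rather than a change of method.
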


The proof of the lemma is similar to the proof of Lemma~\ref{lem:1}.

For $1\le j\le k$, let
\begin{equation*}
\begin{split}
\tilde{\mathrm{D}}_{j} &=-18\delta
d_{j} \mu \bar{\mu } \bigl( d_{j}(\bar{ \mu }-\mu ) +
\mu \bar{\mu }(\lambda +\bar{\lambda }+\delta ) \bigr) \bigl( \bar{\mu }(\bar{
\lambda }+\delta ) -\mu (\lambda +\delta ) -d _{j} \bigr)
\\
&\quad +4\delta \bigl( d_{j}(\bar{\mu }-\mu ) +\mu \bar{\mu }(
\lambda +\bar{\lambda }+\delta ) \bigr)^{3}
\\
&\quad + \bigl( d_{j}(\bar{\mu }-\mu ) +\mu \bar{\mu }(\lambda +
\bar{ \lambda }+\delta ) \bigr)^{2} \bigl( \bar{\mu }(\bar{\lambda }+
\delta ) -\mu (\lambda +\delta ) -d_{j} \bigr)^{2}
\\
&\quad -4d_{j} \mu \bar{\mu } \bigl( \bar{\mu }(\bar{\lambda }+
\delta ) -\mu (\lambda +\delta ) -d_{j} \bigr)^{3} -27
(\delta d_{j} \mu \bar{ \mu })^{2}. \end{split}
\end{equation*}

%t4 #&#
\begin{thm}
\label{thm:4}Let the surplus process\index{surplus process} $  (X_{t}^{\mathbf{b}}(x)  )_{t\ge 0}$
%follow
be defined by \eqref{eq:2} under the above assumptions, and let claim and
premium sizes be exponentially distributed with means $\mu $ and
$\bar{\mu }$, respectively. If the net profit condition\index{net profit condition} \eqref{eq:3}
holds and $\min _{1\le j\le k} \tilde{\mathrm{D}}_{j}>0$, then we
have
%
%e50 #&#
\begin{equation}
\label{eq:50} v_{j}(x)=\tilde{C}_{1,j}
e^{\tilde{z}_{1,j} x} +\tilde{C}_{2,j} e^{
\tilde{z}_{2,j} x} +
\tilde{C}_{3,j} e^{\tilde{z}_{3,j} x} +d_{j}/ \delta
\end{equation}
for all $x\in [b_{j-1},b_{j}]$ and $1\le j\le k$, where $\tilde{z}
_{1,j}$, $\tilde{z}_{2,j}$ and $\tilde{z}_{3,j}$ are distinct real roots
of the cubic equation
%
%e51 #&#
\begin{equation}
\label{eq:51} d_{j} \mu \bar{\mu }z^{3} + \bigl(
d_{j} (\bar{\mu }-\mu ) +\mu \bar{ \mu }(\lambda +\bar{\lambda }+
\delta ) \bigr) z^{2} + \bigl( \bar{ \mu }(\bar{\lambda }+\delta ) -
\mu (\lambda +\delta ) -d_{j} \bigr) z -\delta =0,
\end{equation}
$\tilde{C}_{3,k}=0$ and all the other constants $\tilde{C}_{1,j}$,
$\tilde{C}_{2,j}$ and $\tilde{C}_{3,j}$ are determined from the system
of linear equations \eqref{eq:52}--\eqref{eq:55}:
%
%e52 #&#
\begin{align}
\label{eq:52} %
%\begin{split}
&\lambda e^{-b_{j-1}/\mu } \sum
_{l=1}^{j-1} \Biggl( \sum
_{i=1}^{3} \frac{
\tilde{C}_{i,l}}{\mu \tilde{z}_{i,l}+1}\,
\bigl(e^{(\tilde{z}_{i,l}+1/
\mu )b_{l}} -e^{(\tilde{z}_{i,l}+1/\mu )b_{l-1}} \bigr) \Biggr)\nonumber
\\[-2pt]
&\quad +\sum_{i=1}^{3} \biggl(
\frac{\bar{\lambda }e^{b_{j-1}/\bar{
\mu }}}{\bar{\mu }\tilde{z}_{i,j}-1}\, \bigl(e^{(\tilde{z}_{i,j}-1/\bar{
\mu })b_{j}} - e^{(\tilde{z}_{i,j}-1/\bar{\mu })b_{j-1}} \bigr)\nonumber
\\[-2pt]
& \qquad -(d_{j} \tilde{z}_{i,j} +\lambda +\bar{
\lambda }+\delta ) e^{
\tilde{z}_{i,j}b_{j-1}} \biggr) \tilde{C}_{i,j}\nonumber
\\[-2pt]
&\quad +\bar{\lambda }e^{b_{j-1}/\bar{\mu }} \sum_{l=j+1}^{k}
\Biggl( \sum_{i=1}^{3}
\frac{\tilde{C}_{i,l}}{\bar{\mu }\tilde{z}_{i,l}-1} \, \bigl(e^{(\tilde{z}_{i,l}-1/\bar{\mu })b_{l}} -e^{(\tilde{z}_{i,l}-1/\bar{
\mu })b_{l-1}} \bigr)
\Biggr)\nonumber
\\[-2pt]
&\quad =\frac{d_{j}(\lambda +\bar{\lambda })}{\delta } -\frac{\lambda
e^{-b_{j-1}/\mu }}{\delta } \sum
_{l=1}^{j-1} d_{l}
\bigl(e^{b_{l}/
\mu } -e^{b_{l-1}/\mu } \bigr)\nonumber
\\[-2pt]
& \qquad +\frac{\bar{\lambda }e^{b_{j-1}/\bar{\mu }}}{\delta } \sum_{l=j}^{k}
d _{l} \bigl(e^{-b_{l}/\bar{\mu }} -e^{-b_{l-1}/\bar{\mu }} \bigr), \quad 1
\le j\le k,
%\end{split} %
\end{align}
%
%e53 #&#
\begin{equation}
\label{eq:53} \tilde{C}_{1,1} +\tilde{C}_{2,1} +
\tilde{C}_{3,1}=-d_{1}/\delta ,
\end{equation}
%
%e54 #&#
\begin{align}
\label{eq:54} %
%\begin{split}
&d_{j} \sum
_{i=1}^{3} \tilde{z}_{i,j}
e^{\tilde{z}_{i,j} b_{j}} \tilde{C}_{i,j} -d_{j+1} \sum
_{i=1}^{3} \tilde{z}_{i,j+1}
e^{
\tilde{z}_{i,j+1} b_{j}} \tilde{C}_{i,j+1}\nonumber
\\[-2pt]
&\quad =d_{j} -d_{j+1}, \quad 1\le j\le k-1,
%\end{split} %
\end{align}
and
%
%e55 #&#
\begin{equation}
\label{eq:55} \sum_{i=1}^{3}
e^{\tilde{z}_{i,j} b_{j}} \tilde{C}_{i,j} -\sum
_{i=1} ^{3} e^{\tilde{z}_{i,j+1} b_{j}}
\tilde{C}_{i,j+1} =\frac{d_{j+1}-d
_{j}}{\delta }, \quad 1\le j\le k-1,
\end{equation}
provided that its determinant is not equal to 0.
\end{thm}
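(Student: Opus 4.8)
The plan is to follow the pattern of the proof of Theorem~\ref{thm:3}, with Lemma~\ref{lem:2} and equation~\eqref{eq:49} playing the role of Lemma~\ref{lem:1} and equation~\eqref{eq:19}. By the notation of Section~\ref{sec:2} and Lemma~\ref{lem:2}, the function $v_j(x)$ solves on $[b_{j-1},b_j]$ the linear ODE with constant coefficients~\eqref{eq:49}, whose characteristic equation is exactly~\eqref{eq:51}. A direct substitution shows that the constant $d_j/\delta$ is a particular solution of~\eqref{eq:49}, so the general solution has the form~\eqref{eq:50} with three constants per layer, as soon as~\eqref{eq:51} has three distinct roots. To secure this, one checks by a routine but careful computation that the classical discriminant $18abce-4b^3e+b^2c^2-4ac^3-27a^2e^2$ of the cubic~\eqref{eq:51}, with $a=d_j\mu\bar\mu$, $b=d_j(\bar\mu-\mu)+\mu\bar\mu(\lambda+\bar\lambda+\delta)$, $c=\bar\mu(\bar\lambda+\delta)-\mu(\lambda+\delta)-d_j$ and $e=-\delta$, equals $\tilde{\mathrm{D}}_j$; hence the hypothesis $\min_{1\le j\le k}\tilde{\mathrm{D}}_j>0$ guarantees three distinct real roots $\tilde z_{1,j},\tilde z_{2,j},\tilde z_{3,j}$ of~\eqref{eq:51} in every layer.

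Next I would pin down the signs of the roots from Vieta's formulas and the net profit condition~\eqref{eq:3}, as in the proof of Theorem~\ref{thm:3}. Their product equals $\delta/(d_j\mu\bar\mu)>0$, and their sum equals $-(d_j(\bar\mu-\mu)+\mu\bar\mu(\lambda+\bar\lambda+\delta))/(d_j\mu\bar\mu)$; since $d_j(\bar\mu-\mu)+\mu\bar\mu(\lambda+\bar\lambda+\delta)=\mu(\bar\lambda\bar\mu-\lambda\mu-d_j)+\lambda\mu^2+\lambda\mu\bar\mu+d_j\bar\mu+\mu\bar\mu\delta>0$ under~\eqref{eq:3}, the sum is negative. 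Three distinct real numbers with positive product and negative sum consist of one positive and two negative ones, so I would label $\tilde z_{3,j}$ as the positive root and $\tilde z_{1,j},\tilde z_{2,j}$ as the negative ones. Applying the boundary behaviour $\lim_{x\to\infty}v(x)=d_k/\delta$ from Remark~\ref{rem:4} to the representation~\eqref{eq:50} in the $k$th layer then forces $\tilde C_{3,k}=0$, leaving $3k-1$ unknown constants.

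It remains to derive the $3k-1$ linear equations. The relation $v(0)=0$ applied to~\eqref{eq:50} on $[b_0,b_1]$ gives~\eqref{eq:53}. For each $1\le j\le k-1$, continuity $v_j(b_j)=v_{j+1}(b_j)$ gives~\eqref{eq:55}; writing~\eqref{eq:48} for layer $j$ and for layer $j+1$ at $x=b_j$, subtracting, and using this continuity (the two integral terms on the right-hand sides coincide because $v$ is continuous, while the free terms differ by $d_j-d_{j+1}$) yields $d_jv_j'(b_j)-d_{j+1}v_{j+1}'(b_j)=d_j-d_{j+1}$, that is,~\eqref{eq:54}. The remaining $k$ equations come from evaluating~\eqref{eq:48} at $x=b_{j-1}$ for $1\le j\le k$: substitute~\eqref{eq:50} together with the piecewise-computed expressions for $\int_0^{b_{j-1}}v(u)e^{u/\mu}\,\mathrm{d}u$ and $\int_{b_{j-1}}^\infty v(u)e^{-u/\bar\mu}\,\mathrm{d}u$ (the improper integral converging because in the $k$th layer only the negative exponentials and the constant $d_k/\delta$ remain), and simplify to obtain~\eqref{eq:52}. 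This produces a square linear system of size $3k-1$, which has a unique solution exactly when its determinant is nonzero. As in Theorem~\ref{thm:3}, since~\eqref{eq:49} was derived from~\eqref{eq:48} with no extra smoothness assumptions, the functions~\eqref{eq:50} determined by this system are the unique solutions of~\eqref{eq:48} on the intervals $[b_{j-1},b_j]$ and hence coincide with $v(x)$ there. I expect the main obstacle to be purely computational: verifying the discriminant identity and, above all, the careful bookkeeping in the layer-by-layer evaluation of the two integrals that leads to~\eqref{eq:52}.
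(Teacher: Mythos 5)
Your proposal is correct and follows essentially the same route as the paper: Lemma~\ref{lem:2} reduces the problem to the piecewise ODE \eqref{eq:49}, the discriminant hypothesis gives three distinct real roots of \eqref{eq:51} and the representation \eqref{eq:50}, the $3k-1$ conditions come from $v(0)=0$, the continuity and derivative-matching relations at the $b_j$, and the evaluation of \eqref{eq:48} at $x=b_{j-1}$, and uniqueness is inherited from \eqref{eq:48} exactly as in Theorem~\ref{thm:3}. The only (minor, and in fact slightly cleaner) deviation is your sign analysis of the roots: you combine the positive product $\delta/(d_j\mu\bar{\mu})$ with the negative sum $-\bigl(d_j(\bar{\mu}-\mu)+\mu\bar{\mu}(\lambda+\bar{\lambda}+\delta)\bigr)/(d_j\mu\bar{\mu})$ to get one positive and two negative roots for every $j$ directly, whereas the paper only rules out the all-positive case for $j=k$ by noting that $v_k$ would otherwise have to be constant given $\lim_{x\to\infty}v_k(x)=d_k/\delta$.
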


\begin{proof}
By Lemma~\ref{lem:2} and the notation introduced in Section
\ref{sec:2}, we deduce that the function $v_{j}(x)$ is a solution to
\eqref{eq:49} on $x\in [b_{j-1},b_{j}]$ for each $1\le j\le k$. In
addition, it is easily seen that \eqref{eq:51} is the corresponding
characteristic equation and its discriminant coincides with the constant
$\tilde{\mathrm{D}}_{j}$ introduced above. The assumption $\min _{1\le j\le k} \tilde{\mathrm{D}}_{j}>0$ guarantees that cubic
equation \eqref{eq:51} has three distinct real roots $\tilde{z}_{1,j}$,
$\tilde{z}_{2,j}$ and $\tilde{z}_{3,j}$. Hence, the general solution to
\eqref{eq:49} is given by \eqref{eq:50} with some constants
$\tilde{C}_{1,j}$, $\tilde{C}_{2,j}$ and $\tilde{C}_{3,j}$.

By Vieta's theorem, we conclude that \eqref{eq:51} has either two or no
negative roots for each $1\le j\le k$. Since the net profit condition\index{net profit condition}
\eqref{eq:3} holds, applying arguments similar to those in
\cite[p.~70]{Sc2008} shows that $\lim_{x\to \infty } v_{k}(x) =d_{k}/
{\delta }$. \xch{Consequently}{Concequently}, if equation \eqref{eq:51} for $j=k$ had no
negative roots, the function $v_{k}(x)$ would be constant, which is
impossible. Therefore, equation \eqref{eq:51} for $j=k$ has two negative roots.
We denote those negative roots by $\tilde{z}_{1,k}$ and $\tilde{z}
_{2,k}$, and let $\tilde{z}_{3,k}$ be the third root. Since
$\tilde{z}_{3,k}>0$, we conclude that $\tilde{C}_{3,k}=0$.

To determine all the other constants $\tilde{C}_{1,j}$, $\tilde{C}
_{2,j}$ and $\tilde{C}_{3,j}$, we need $3k-1$ boundary conditions. The
first $k$ conditions are obtained by letting $x=b_{j-1}$ in
\eqref{eq:48} for $1\le j\le k$:
%
%e56 #&#
\begin{align}
\label{eq:56} %
%\begin{split}
&d_{j}
v'(b_{j-1})+(\lambda +\bar{\lambda }+\delta
)v(b_{j-1})\nonumber
\\
&\quad =\frac{\lambda }{\mu }\, e^{-b_{j-1}/\mu } \int_{0}^{b_{j-1}}
v(u)e^{u/\mu }\, \mathrm{d}u +\frac{\bar{\lambda }}{\bar{\mu }}\, e ^{b_{j-1}/\bar{\mu }}
\int_{b_{j-1}}^{\infty } v(u)e^{-u/\bar{\mu }} \,
\mathrm{d}u +d_{j}.
%\end{split} %
\end{align}\newpage

One more condition is obtained from the equality $v(0)=0$. The last
$2(k-1)$ conditions are derived from the equalities $d_{j} v'_{j}(b
_{j}) -d_{j} =d_{j+1} v'_{j+1}(b_{j}) -d_{j+1}$, which easily %follows
follow
from \eqref{eq:48}, and $v_{j}(b_{j})=v_{j+1}(b_{j})$ for $1\le j
\le k-1$.

Substituting \eqref{eq:50} into \eqref{eq:56} \xch{as well as into the}{,
applying} equalities $v(0)=0$,
$d_{j} v'_{j}(b_{j}) -d_{j} =d_{j+1} v'_{j+1}(b_{j}) -d_{j+1}$ and
$v_{j}(b_{j}) =v_{j+1}(b_{j})$ for $1\le j\le k-1$ and doing some
simplifications yield the system of linear equations
\eqref{eq:52}--\eqref{eq:55}, which has a unique solution provided that
its determinant is not equal to 0. Thus, piecewise differential equation
\eqref{eq:49} has a unique solution satisfying certain conditions, and
that solution is given by \eqref{eq:50}. Applying arguments similar to
those in the proof of Theorem~\ref{thm:3} guaranties that the functions
$v_{j}(x)$ we have found coincide with the expected discounted dividend
payments\index{dividend ! payments} until ruin on $[b_{j-1},b_{j}]$, which completes the proof.
\end{proof}

%r7 #&#
\begin{remark}
\label{rem:7}
In particular, if $k=2$, then $\tilde{C}_{3,2}=0$ and the constants
$\tilde{C}_{1,1}$, $\tilde{C}_{2,1}$, $\tilde{C}_{3,1}$, $\tilde{C}
_{1,2}$ and $\tilde{C}_{2,2}$ are determined from the system of linear
equations \eqref{eq:57}--\eqref{eq:61}:
%
%e57 #&#
\begin{align}
\label{eq:57} %
%\begin{split}
&\sum_{i=1}^{3}
\biggl( \frac{\bar{\lambda }}{\bar{\mu }\tilde{z}_{i,1}-1}\, \bigl(1-e^{(
\tilde{z}_{i,1}-1/\bar{\mu })b_{1}} \bigr)
+d_{1} \tilde{z}_{i,1} + \lambda +\bar{\lambda }+\delta
\biggr) \tilde{C}_{i,1}\nonumber
\\
&\quad +\sum_{i=1}^{2}
\frac{\bar{\lambda }e^{(\tilde{z}_{i,2}-1/\bar{
\mu })b_{1}}}{\bar{\mu }\tilde{z}_{i,2}-1}\, \tilde{C}_{i,2} =-\frac{d
_{1} \lambda }{\delta } -
\frac{\bar{\lambda }(d_{1}-d_{2}) e^{-b_{1}/\bar{
\mu }}}{\delta },\\[-24pt]\nonumber
%\end{split} %
\end{align}
%
%e58 #&#
\begin{align}
\label{eq:58} %
%\begin{split}
&\lambda e^{-b_{1}/\mu } \sum
_{i=1}^{3} \frac{\tilde{C}_{i,1}}{\mu
\tilde{z}_{i,1}+1}\,
\bigl(1-e^{(\tilde{z}_{i,1}+1/\mu )b_{1}} \bigr) + \sum_{i=1}^{2}
\biggl( \frac{\bar{\lambda }e^{\tilde{z}_{i,2} b_{1}}}{\bar{
\mu }\tilde{z}_{i,2}-1}\nonumber
\\
&\quad +(d_{2} \tilde{z}_{i,2} +\lambda +\bar{\lambda
}+\delta ) e ^{\tilde{z}_{i,2}b_{1}} \biggr) \tilde{C}_{i,2} =
\frac{\lambda (d_{1}-d
_{2})}{\delta } -\frac{d_{1} \lambda e^{-b_{1}/\mu }}{\delta },
%\end{split} %
\end{align}
%
%e59 #&#
\begin{equation}
\label{eq:59} \tilde{C}_{1,1} +\tilde{C}_{2,1} +
\tilde{C}_{3,1}=-d_{1}/\delta ,
\end{equation}
%
%e60 #&#
\begin{equation}
\label{eq:60} d_{1} \sum_{i=1}^{3}
\tilde{z}_{i,1} e^{\tilde{z}_{i,1} b_{1}} \tilde{C}_{i,1}
-d_{2} \sum_{i=1}^{2}
\tilde{z}_{i,2} e^{\tilde{z}
_{i,2} b_{1}} \tilde{C}_{i,2}=d_{1}
-d_{2}
\end{equation}
and
%
%e61 #&#
\begin{equation}
\label{eq:61} \sum_{i=1}^{3}
e^{\tilde{z}_{i,1} b_{1}} \tilde{C}_{i,1} -\sum
_{i=1} ^{2} e^{\tilde{z}_{i,2} b_{1}}
\tilde{C}_{i,2} =\frac{d_{2}-d_{1}}{
\delta }
\end{equation}
provided that its determinant is not equal to 0.
\end{remark}

%s6 #&#
\section{Numerical illustrations}%
\label{sec:6}
To present numerical examples for the results obtained in
Section~\ref{sec:5}, we set $\lambda =0.1$, $\bar{\lambda }=2.3$,
$\mu =3$, $\bar{\mu }=0.2$, $b=5$ and $\delta =0.01$.

In addition, we denote by $\psi ^{*}(x)$ the ruin probability\index{ruin probability} in the
corresponding model without dividend payments.\index{dividend ! payments} It is given by
\begin{equation*}
\psi ^{*}(x)=\frac{\lambda (\mu +\bar{\mu })}{\bar{\mu }(\lambda +\bar{
\lambda })}\, \exp \biggl( -
\frac{(\bar{\lambda }\bar{\mu }-\lambda
\mu )x}{\mu \bar{\mu }(\lambda +\bar{\lambda })} \biggr), \quad x \in [0,\infty ),
\end{equation*}
(see \cite{Boi2003,MiRa2016}). For the parameters chosen above,
$\psi ^{*}(x) \approx 0.666667 e^{-0.111111 x}$.\vadjust{\goodbreak}

Moreover, let now $d_{1}=0.05$ and $d_{2}=0.1$. Applying
Theorems~\ref{thm:3} and~\ref{thm:4} as well as Remarks~\ref{rem:6}
and~\ref{rem:7} we can calculate the ruin probability $\psi (x)$\index{ruin probability} and the
expected discounted dividend payments\index{dividend ! payments} until ruin $v(x)$:
\begin{gather*}
\psi _{1}(x) \approx 0.530821 e^{-0.084781 x} +0.179668
e^{-43.248552 x} +0.289512, \quad x\in [0,5],
\\
\psi _{2}(x) \approx 0.826718 e^{-0.051863 x} -7.043723 \cdot
10^{38} e ^{-19.28147 x}, \quad x\in [5,\infty );
\\
\begin{split}
v_{1}(x) \approx 5 &- 2.992137
e^{-43.470279 x} -4.421273 e^{-0.124597
x}
\\
&+2.41341 e^{0.061543 x}, \quad x\in [0,5],
\end{split} %
\\
v_{2}(x) \approx 10 -2.198169 \cdot 10^{40}
e^{-19.405407 x} -6.97712 e^{-0.107684 x}, \quad x\in [5,\infty ).
\end{gather*}

Table~\ref{table:1} presents the results of calculations for some values
of $x$.

%t1 #&#
\begin{table}
\caption{The ruin probabilities without and with dividend payments and the expected discounted
dividend payments, $d_{1}=0.05$ and $d_{2}=0.1$}%
\label{table:1}
\begin{tabular*}{6cm}{@{\extracolsep{\fill}}clll@{}}
\hline
$x$ & \multicolumn{1}{c}{$\psi ^{*}(x)$} & \multicolumn{1}{c}{$\psi (x)$} & \multicolumn{1}{c}{$v(x)$} \\
\hline
0 &0.666667 &1 &0\\
1 &0.596560 &0.777184 &3.663273\\
2 &0.533825 &0.737542 &4.283457\\
5 &0.382502 &0.636926 &5.911685\\
7 &0.306284 &0.575029 &6.716708\\
10 &0.219462 &0.492173 &7.623108\\
15 &0.125917 &0.379750 &8.612682\\
20 &0.072245 &0.293007 &9.190265\\
50 &0.002577 &0.061825 &9.967986\\
70 &0.000279 &0.021912 &9.996285\\
\hline
\end{tabular*}
\end{table}

Next, for $d_{1}=0.1$ and $d_{2}=0.05$, we get
\begin{gather*}
\psi _{1}(x) \approx 1.204304 e^{-0.051863 x} +0.218067
e^{-19.28147 x} -0.42237, \quad x\in [0,5],
\\
\psi _{2}(x) \approx 0.772527 e^{-0.084781 x} +1.012903 \cdot
10^{91} e ^{-43.248552 x}, \quad x\in [5,\infty );
\\
\begin{split}
v_{1}(x) \approx 10 &- 2.219094
e^{-19.405407 x} -5.609737 e^{-0.107684
x}
\\[-2pt]
&-2.171169 e^{0.079758 x}, \quad x\in [0,5],
\end{split} %
\\
v_{2}(x) \approx 5 +5.716149 \cdot 10^{92}
e^{-43.470279 x} -2.857069 e^{-0.124597 x}, \quad x\in [5,\infty ).
\end{gather*}

The values of $\psi ^{*}(x)$, $\psi (x)$ and $v(x)$ for some $x$ are
given in Table~\ref{table:2}.

%t2 #&#
\begin{table}
\caption{The ruin probabilities without and with dividend payments and the expected discounted
dividend payments, $d_{1}=0.1$ and $d_{2}=0.05$}%
\label{table:2}
\begin{tabular*}{6cm}{@{\extracolsep{\fill}}clll@{}}
\hline
$x$ & \multicolumn{1}{c}{$\psi ^{*}(x)$} & \multicolumn{1}{c}{$\psi (x)$} & \multicolumn{1}{c}{$v(x)$} \\
\hline
0 &0.666667 &1 &0\\
1 &0.596560 &0.721066 &2.611525\\
2 &0.533825 &0.663275 &2.930525\\
5 &0.382502 &0.506845 &3.490686\\
7 &0.306284 &0.426750 &3.805635\\
10 &0.219462 &0.330912 &4.178134\\
15 &0.125917 &0.216577 &4.559200\\
20 &0.072245 &0.141747 &4.763582\\
50 &0.002577 &0.011141 &4.994372\\
70 &0.000279 &0.002044 &4.999534\\
\hline
\end{tabular*}
\end{table}

The results presented in Tables~\ref{table:1} and~\ref{table:2} indicate
that dividend payments\index{dividend ! payments} substantially increase the ruin probability.\index{ruin probability} The
first strategy is much more profitable, although the corresponding ruin
probability\index{ruin probability} is larger in that case.

%\begin{appendix}
%\end{appendix}

\begin{acknowledgement}%[title={Acknowledgments}]
The author is deeply grateful to the anonymous referees for careful
reading and valuable comments and suggestions, which helped to improve
the earlier version of the paper.
\end{acknowledgement}

%\begin{funding}
%\gsponsor[id=,sponsor-id=]{}
%\gnumber[refid=]{}
%\end{funding}

\end{document}